\newtheorem{thm}{Theorem}
\newtheorem{lem}{Lemma}
\newtheorem{cor}{Corollary}
\newtheorem{prop}{Proposition}
\theoremstyle{definition}
\newtheorem{defn}{Definition}
\newtheorem{rem}{Remark}
\newtheorem{exmp}{Example}
\newcommand{\Cur}{\mathop{\fam 0 Cur}\nolimits}
\newcommand{\Cend}{\mathop{\fam 0 Cend}\nolimits}
\newcommand{\End}{\mathop{\fam 0 End}\nolimits}
\newcommand{\Hom}{\mathop{\fam 0 Hom}\nolimits}
\newcommand{\Aut}{\mathop{\fam 0 Aut}\nolimits}
\newcommand{\oo}[1]{\mathbin{{}_{(#1)}}}
\begin{document}

\title{Graded associative conformal algebras of finite type}

\author{Pavel Kolesnikov}

\address{Sobolev Institute of Mathematics, Novosibirsk, Russia}
\email{pavelsk@math.nsc.ru}

\begin{abstract}
In this paper, we consider graded associative conformal algebras.
The class of these objects includes pseudo-algebras over
non-cocommutative Hopf algebras of regular functions on
some linear algebraic groups.
In particular, an associative conformal algebra
which is graded by a finite group $\Gamma $
is a pseudo-algebra over the coordinate Hopf algebra of a
linear algebraic group $G$ such that the identity component $G^0$ is the affine line and
$G/G^0\simeq \Gamma $.
A classification of simple and semisimple graded associative conformal
algebras of finite type is obtained.
\keywords{conformal algebra \and graded algebra \and pseudo-algebra \and classification}
% \PACS{PACS code1 \and PACS code2 \and more}
\subjclass{MSC 16W50 \and 81R05 \and 16S99}
\end{abstract}

\maketitle

\section{Introduction}

Conformal algebras
were introduced in \cite{Kac1998} as a useful tool for studying vertex algebras
appeared in two-dimensional conformal field theory in mathematical physics
 \cite{BPZ}.
The structure of a (Lie) conformal algebra
encodes the singular part of the
operator product expansion (OPE) which is responsible for the
commutator of two chiral fields.

From the algebraic point of view, the notions of
conformal algebras, their representations and cohomologies
are higher-level analogues of the ordinary notions
in the pseudo-tensor category \cite{BD2004}
associated with the polynomial Hopf algebra $\Bbbk[D]$, see \cite{BDK2001} for
details.

For an arbitrary Hopf algebra $H$, an algebra in the pseudo-tensor
category associated with $H$ is called {\em pseudo-algebra}.
Note that ordinary algebras (representations, cohomologies)
correspond to the ``trivial'' case of 1-dimensional Hopf algebra.
For pseudo-algebras, the analogues of finite-dimensional
algebras are those finitely generated as modules over~$H$.
These pseudo-algebras are said to be {\em of finite type\/} or simply
{\em finite}.

The structure theory of finite Lie conformal algebras was developed
in \cite{DK1998} and later generalized in \cite{BDK2001} for pseudo-algebras
over a wide class of cocommutative Hopf algebras.
The structure theorems for finite associative conformal 
algebras and pseudo-algebras over cocommutative Hopf algebras
can be derived from the corresponding statements on Lie algebras.

Our aim is to obtain structure theorems for finite associative pseudo-algebras
over some non-cocommutative Hopf algebras (Lie pseudo-algebras can not be defined
in this case).

In \cite{Kol2008(TG)}, we proposed the notion of a conformal algebra
over a linear algebraic group $G$ which is equivalent to the notion
of a pseudo-algebra over the Hopf algebra $\Bbbk[G]$ of regular functions on~$G$.
Ordinary algebras over a field $\Bbbk $ correspond to the case of
trivial group, conformal algebras---to the case when $G$ is isomorphic to the
affine line $\mathbb A^1 = (\Bbbk, +)$.

In this paper, we introduce the notion of a graded conformal algebra over a
(connected) abelian linear algebraic group $G^0$.
In contrast to the case of ordinary algebras, we need three parameters
to define the grading: a group $\Gamma $, a homomorphism $\sigma :\Gamma \to \Aut G^0$,
and a 2-cocycle $\varphi $ of the group $\Gamma $ with coefficients in
$G^0$ endowed with $\Gamma$-module structure $\gamma\lambda = \lambda^{\sigma(\gamma)}$,
$\gamma \in \Gamma $, $\lambda \in G^0$.
The main reason for this approach is the following:
If $|\Gamma |<\infty$ then a $(\Gamma, \sigma, \varphi)$-graded
conformal algebra over $G^0$ is the same as
conformal algebra over a linear algebraic group $G$
obtained as the extension of $G^0$ by means of $\Gamma $
with respect to $\sigma $ and~$\varphi $.
However, the group $\Gamma $ may be infinite. In this case, a
$(\Gamma, \sigma, \varphi)$-graded conformal algebra is
no more a pseudo-algebra in the sense of \cite{BDK2001}.

The main result of the paper is the classification of finite simple and semisimple
associative $(\Gamma, \sigma, \varphi)$-graded conformal algebras over
$\mathbb A^1$.

\section{Preliminaries in conformal algebras and pseudo-algebras}

Suppose $A$ is an algebra (not necessarily associative) over a field 
$\Bbbk $, $\mathrm{char}\,\Bbbk=0$. 
Denote by $A[[z,z^{-1}]]$ the space of formal power series (distributions)
that may be infinite in both directions. A pair of distributions
$a(z), b(z)\in A[[z,z^{-1}]]$
is called {\em local\/} if there exists a natural $N$ such that
$a(w)b(z)(w-z)^N =0$ in the space $A[[z,z^{-1},w,w^{-1}]]$. 
The product $a(w)b(z)$ of two formal distributions that form a local pair
may be presented as 
\[
a(w)b(z) =
 \sum\limits_{n= 0}^{N-1} \dfrac{1}{n!} c_n(z)
 \dfrac{\partial^n\delta(w-z)}{\partial z^n}, 
\]
where $\delta(w-z)=\sum\limits_{m\in \mathbb Z} w^m z^{-m-1}$
is the formal delta-function, $c_n(z)\in A[[z,z^{-1}]]$ \cite{Kac1998}.

For example, 
if $V$ is a vertex operator algebra (see \cite{FLM1998} as a general reference)
with a state-field correspondence $Y:V\to \mathrm{gl}\, V[[z,z^{-1}]]$, 
$Y:a\mapsto Y(a,z)$,
then the space of all distributions $\{V(a,z)\mid a\in V\}$
consists of pairwise mutually local series.

For a local pair of distributions $a(z), b(z)\in A[[z,z^{-1}]]$
one may consider the Fourier transform of their product
\begin{equation}\label{eq:FourierTr}
(a\oo{\lambda}b)(z) =\mathrm{Res}_{w=0} a(w)b(z)\exp\{\lambda (w-z) \},
\end{equation}
where $\mathrm{Res}_{w=0}f(w,z,\lambda)$ is the formal residue of $f$, i.e., 
the coefficient at $w^{-1}$.
Locality implies the {\em $\lambda $-product\/} $(a\oo{\lambda } b)$ 
to be a polynomial in $\lambda $ with coefficients in $A[[z,z^{-1}]]$. 
Axiomatic description of the properties of this new ``parametrized'' operation
and its relationship with the ordinary derivation $T=d/dz$ on $A[[z,z^{-1}]]$
leads to the following

\begin{defn}[\cite{Kac1998}]\label{defn:Conformal}
A {\em conformal algebra\/} is a linear space $C$ over a field $\Bbbk $
of zero characteristic endowed with a linear map $T:C\to C$ 
and bilinear operation 
$(\cdot\oo{\lambda }\cdot): C\otimes C\to C[\lambda ]$
such that 
$(Ta\oo{\lambda }b)=-\lambda (a\oo{\lambda }b)$
and $(a\oo{\lambda }Tb)=(T+\lambda)(a\oo{\lambda }b)$.
\end{defn}

Conformal algebra which is finitely generated as $\Bbbk[T]$-module 
is said to be {\em finite}.

A subspace $C\subseteq A[[z,z^{-1}]]$ that consists of 
pairwise mutually local series which is closed under 
$d/dz$ and $(\cdot\oo{\lambda}\cdot)$ given by 
\eqref{eq:FourierTr} is called a conformal algebra 
of formal distributions over~$A$.

For every conformal algebra $C$ there exists a unique (up to 
isomorphism) {\em coefficient\/} algebra $\mathcal A(C)$  \cite{Roitman1999}
such that $C$ is embedded into 
a conformal algebra of formal distributions over $\mathcal A(C)$ 
and $\mathcal A(C)$ is universal among all such algebras $A$.
The following natural definition was proposed in \cite{Roitman1999}:
Given a variety $\mathfrak M$ of algebras
(associative, commutative, Lie, etc.), a conformal algebra 
$C$ is said to be $\mathfrak M$-conformal algebra 
if and only if $\mathcal A(C)$
belongs to $\mathfrak M$.
In particular, every vertex operator algebra gives rise to a Lie conformal algebra.
This is the main motivation to study Lie conformal algebras and their representations.

Structure theory of finite Lie and associative conformal algebras was 
developed in \cite{DK1998}. In particular, if $\Bbbk $ is an algebraically closed 
field of zero characteristic then every simple finite 
associative conformal algebra $C$
is isomorphic to the {\em current conformal algebra\/} over
$A=M_n(\Bbbk )$, i.e., 
\[
C\simeq \Bbbk[T]\otimes A, \quad 
(f(T)\otimes a)\oo{\lambda } (g(T)\otimes b) = f(-\lambda)g(T+\lambda )\otimes ab,
\]
for $f,g\in \Bbbk[T]$, $a,b\in A$.
Conformal algebra is called semisimple if it has no nonzero ideals $I$ such 
that $I\oo{\lambda } I=0$. A semisimple finite associative conformal algebra 
is proved to be a direct sum of simple ones \cite{DK1998}.

Note that for finite Lie conformal algebras there exists a unique exceptional 
example apart from current conformal algebras---the Virasoro algebra. 
This is a one-dimensional $\Bbbk[T]$ module with generator $v$ such that 
$v\oo{\lambda } v = (T+2\lambda )v$. 

An important generalization of conformal algebras was proposed in \cite{BDK2001}. 
For every Hopf algebra $H$ with coproduct $\Delta:H\to H\otimes H$
and antipode $S:H\to H$ one may consider the class 
$\mathcal M(H)$ of left unital $H$-modules 
as a pseudo-tensor category in the sense of \cite{BD2004}.
The notion of a pseudo-tensor category generalizes the notion of an operad. 
Namely, operads are exactly the pseudo-tensor category with only one object.
For one-dimensional 
Hopf algebra $H=\Bbbk$, $\mathcal M(H)$ is the class of linear spaces 
equipped by polylinear maps.

An algebra in $\mathcal M(H)$ is called a {\em pseudo-algebra over $H$}.
This is a module $C\in \mathcal M(H)$ endowed with a map 
\begin{equation}\label{eq:PseudoProduct}
*: C\otimes C \to (H\otimes H)\otimes _H C
\end{equation}
such that 
\[
(ha*gb)=((h\otimes g)\otimes_H 1)(a*b), \quad a,b\in C,\ h,g\in H.
\] 
Here $H\otimes H$ is considered as the outer product of regular right 
$H$-modules, i.e., $(h\otimes g)\cdot f = (h\otimes g)\Delta(f)$,
$f,g,h\in H$.

Conformal algebras in the sense of Definition \ref{defn:Conformal}
are exactly pseudo-algebras over $H=\Bbbk[T]$, $\Delta(T)=T\otimes 1+1\otimes T$,
$S(T)=-T$. Namely, if $C$ is a conformal algebra, $a,b\in C$,
$(a\oo{\lambda} b)= \sum\limits_{n\ge 0}\lambda ^n c_n$
then 
$a*b = \sum\limits_{n\ge 0} (-1)^n(T^n\otimes 1)\otimes_H c_n$.

From the categorical point of view (similar to \cite{GK1994}), 
a pseudo-algebra is a functor from the operad of planar binary 
trees to $\mathcal M(H)$. 
Given a variety $\mathfrak M$ of (ordinary) algebras defined by 
a family of homogeneous polylinear identities, 
one may define what is an $\mathfrak M$-pseudo-algebra 
over $H$ by means of the operad associated with $\mathfrak M$.
For conformal algebras, this definition is equivalent to the mentioned above:
An $\mathfrak M$-pseudo-algebra over $\Bbbk[T]$ ($\mathrm{char}\,\Bbbk =0$)
is the same as $\mathfrak M$-conformal algebra \cite{Kol2006CA}.

A systematic study of Lie and associative pseudo-algebras over 
a wide class of cocommutative Hopf algebras was done in \cite{BDK2001}.

The operad associated with Lie 
(or commutative) algebras is symmetric, but $\mathcal M(H)$
is a symmetric pseudo-tensor category only if $H$ is cocommutative. 
This is why one cannot define Lie pseudo-algebras over a non-cocommutative
Hopf algebras in this way. 
Nevertheless, associative pseudoalgebras may be defined over 
an arbitrary Hopf algebra.

A natural class of Hopf algebras is given by coordinate algebras 
of linear algebraic groups. 
If $G$ is such a group and $H=\Bbbk[G]$ is the algebra of regular 
functions on $G$ then the class of pseudo-algebras over $H$ 
can be completely described in terms of operations indexed by 
elements of~$G$.

\begin{defn}[\cite{Kol2008(TG)}]
A {\em conformal algebra  over $G$} is a left unital 
$H$-module $C$ endowed with a family of bilinear operations 
\[
(\cdot\oo{g}\cdot): C\otimes C\to C, \quad g\in G,
\]
such that the following properties hold.
\begin{itemize}
\item 
For every $a,b\in C$ there exists $\sum\limits_i h_i\otimes c_i\in H\otimes C$
such that $(a\oo{g} b) = \sum\limits_ih_i(g)c_i$ for all $g\in G$; 
\item
$(ha\oo{g} b) = h(g^{-1})(a\oo{g} b)$;
\item
$(a\oo{g} hb) = L_gh (a\oo{g} b)$, where $(L_gh)(x)=h(gx)$ for $x\in G$.
\end{itemize}
\end{defn}

Obviously, if 
$G=\mathbb A^1\simeq (\Bbbk, +)$ then this definition 
coincides with Definition \ref{defn:Conformal}. In general, 
a conformal algebra over $G$ is the same as pseudo-algebra over $H=\Bbbk[G]$.
Associativity of such a pseudo-algebra $C$ may be expressed in terms of 
$g$-products as follows:
\[
a\oo{g}(b\oo{h} c) = (a\oo{g}b)\oo{hg} c, \quad a,b,c\in C,\ g,h\in G.
\]

The main purpose of this paper is to describe the structure of 
simple and semisimple conformal algebras over linear algebraic 
groups $G$ such that $G^0$ is isomorphic to the affine line.
It turns out that such objects are natural to describe in terms 
of graded conformal algebras over the affine line. 
This is a step towards complete structure theory of conformal algebras 
over 1-dimensional linear algebraic groups. 
Another case (when $G^0\simeq \Bbbk^*$) corresponds 
to $\mathbb Z$-conformal algebras introduced in \cite{GK1997}. 
Structure theory of such algebras is a topic for further investigation.

\section{Graded conformal algebras}

Let $G^0$ be an abelian connected linear algebraic group, and let
$\Aut G^0$ be the group of its continuous automorphisms.
Suppose $\Gamma $ is a group, and fix a homomorphism $\sigma : \Gamma \to \Aut G^0$.
Also, choose a 2-cocycle
$\varphi \in Z^2(\Gamma, G^0, \sigma)$, i.e., a map
$\varphi: \Gamma\times \Gamma \to G^0$ such that
\begin{equation}\label{eq:CocycleCondition}
 \varphi(\alpha\beta, \gamma)\varphi(\alpha, \beta )^{\sigma(\gamma)}
 = \varphi(\alpha, \beta\gamma)\varphi(\beta, \gamma)
\end{equation}
for all $\alpha , \beta, \gamma \in \Gamma$ and $\varphi(e,e)=e$.
Note that $\varphi(\alpha, e)=\varphi(e,\alpha)=e$ and
$\varphi(\alpha^{-1},\alpha)=\varphi(\alpha,\alpha^{-1})^{\sigma(\alpha)}$
for all $\alpha \in \Gamma $.

Denote by $H^0$ the algebra of regular functions on $G^0$.
For $\lambda \in G^0$, denote by $L_\lambda $ the shift operator on $H^0$,
i.e., $(L_\lambda h)(\mu )= h(\lambda \mu)$, $h\in H^0$, $\mu \in G^0$.

\begin{defn}\label{defn:GradedConformalG0}
A $(\Gamma, \sigma, \varphi)$-{\em graded conformal algebra over\/} $G^0$
is a graded $H^0$-module
\[
C = \bigoplus\limits_{\gamma \in \Gamma }C_\gamma
\]
equipped by a family of $\Bbbk $-bilinear operations
$(\cdot \oo{\lambda }\cdot)$, $\lambda \in G^0$, such that:
\begin{itemize}
\item[(C1)]
For every $a,b\in C$ the function
$(a\oo{x} b)$ is regular, i.e.,
$(a\oo{\lambda } b)=\sum_i h_i(\lambda )c_i$
for an appropriate $\sum_i h_i\otimes c_i \in H^0\otimes C$.
\item[(C2)]
If $a\in C_\alpha $ and $b\in C_\beta $ then $(a\oo{\lambda } b)\in C_{\alpha\beta}$,
$\alpha,\beta \in \Gamma$, $\lambda \in G^0$.
\item[(C3)]
If $a\in C_\alpha $, $b\in C_\beta $, and $h\in H^0$ then
\[
\begin{gathered}
(ha\oo{\lambda }b)= h((\lambda^{-1}\varphi(\alpha, \alpha^{-1})^{-1})^{\sigma(\alpha)})
  (a\oo{\lambda }b), \\
(a\oo{\lambda }hb)=
(L_{\lambda^{\sigma(\alpha\beta)}\varphi(\alpha^{-1},\alpha\beta)}h) (a\oo{\lambda } b)
\end{gathered}
\]
for all $\lambda \in G^0$.
\end{itemize}
\end{defn}

If $C$ is a finitely generated $H^0$-module then $C$ is said to be {\em finite}
$(\Gamma, \sigma, \varphi)$-graded conformal algebra over $G^0$.

A homomorphism of $(\Gamma, \sigma, \varphi)$-graded conformal algebras over
$G^0$ is a homogeneous homomorphism of graded $H^0$-modules
preserving the operations $(\cdot\oo{\lambda }\cdot)$
for all $\lambda \in G^0$.

Recall that 2-cocycles $\varphi, \varphi' \in Z^2(\Gamma, G^0, \sigma)$
are called cohomological if there exists a 1-cochain $\tau :\Gamma \to G^0$
such that
\begin{equation}\label{eq:Coboundary}
\varphi'(\alpha, \beta)\varphi(\alpha,\beta)^{-1}
 = \tau(\alpha)^{\sigma(\beta)}\tau(\beta)\tau(\alpha\beta)^{-1},
\quad \alpha,\beta\in \Gamma .
\end{equation}
Given $\Gamma $ and $\sigma $ as above, a 2-cocycle
 $\varphi\in Z^2(\Gamma, G^0,\sigma)$
defines an extension
\[
\{e\}\to G^0\to G \to \Gamma \to \{e\},
\]
where $\sigma $ represents the conjugation:
$g^{-1}\lambda g = \lambda ^{\sigma(\gamma)}$, where
 $\gamma =gG^0\in \Gamma=G/G^0$.
Cocycles $\varphi $ and $\varphi'$ define isomorphic extensions if and only if
they are cohomological (see, e.g., \cite{Brown}).

\begin{prop}
If $\varphi,\varphi'\in Z^2(\Gamma, G^0,\sigma)$ are cohomological then
the categories of $(\Gamma, \sigma, \varphi)$- and
$(\Gamma, \sigma, \varphi')$-graded conformal algebras over $G^0$ are
equivalent.
\end{prop}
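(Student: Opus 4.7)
The plan is to use the 1-cochain $\tau\colon\Gamma\to G^0$ that witnesses \eqref{eq:Coboundary} to produce an explicit functor $F_\tau$ from $(\Gamma,\sigma,\varphi)$-graded to $(\Gamma,\sigma,\varphi')$-graded conformal algebras over $G^0$, with inverse the analogously defined functor $F_{\tau^{-1}}$ (built from the cochain $\gamma\mapsto\tau(\gamma)^{-1}$, which witnesses the coboundary between $\varphi'$ and $\varphi$).

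On objects, $F_\tau$ sends $C=\bigoplus_{\gamma\in\Gamma}C_\gamma$ to the same underlying graded $\Bbbk$-space equipped with the twisted $H^0$-action
\[
h\cdot_\tau a := (L_{\tau(\alpha)^{-1}}h)\,a, \qquad a\in C_\alpha,\ h\in H^0,
\]
and the twisted operations
\[
(a\oo{\lambda}b)_\tau := (a\oo{\lambda\,\tau(\alpha^{-1})}b), \qquad a\in C_\alpha,\ b\in C_\beta,\ \lambda\in G^0;
\]
on morphisms $F_\tau$ is the identity underlying $\Bbbk$-linear map. Since each $L_\mu$ is an algebra endomorphism of $H^0$ and $\tau(e)=e$, this is a valid left unital $H^0$-module structure, and preservation of morphisms is automatic because both twisted structures are expressed in terms of the original ones.

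The key step is to verify (C1)--(C3) for $F_\tau(C)$ with $\varphi'$ in place of $\varphi$. Property (C1) holds because multiplicative translation by $\tau(\alpha^{-1})$ preserves regularity, and (C2) is immediate as the grading is untouched. For the left half of (C3), applying the original (C3) inside the twisted definitions reduces the required identity to
\[
\varphi'(\alpha,\alpha^{-1}) = \varphi(\alpha,\alpha^{-1})\,\tau(\alpha)^{\sigma(\alpha^{-1})}\,\tau(\alpha^{-1}),
\]
which is \eqref{eq:Coboundary} at $(\alpha,\alpha^{-1})$ using $\tau(e)=e$. For the right half, combining the $L_{\tau(\beta)^{-1}}$ factor absorbed into $h$ by the twisted action on $b$ with the $L_{\tau(\alpha\beta)^{-1}}$ applied by the twisted action on $(a\oo{\mu}b)\in C_{\alpha\beta}$, the required identity becomes
\[
\varphi'(\alpha^{-1},\alpha\beta) = \varphi(\alpha^{-1},\alpha\beta)\,\tau(\alpha^{-1})^{\sigma(\alpha\beta)}\,\tau(\alpha\beta)\,\tau(\beta)^{-1},
\]
which is \eqref{eq:Coboundary} at $(\alpha^{-1},\alpha\beta)$ after rearranging factors in the abelian group $G^0$.

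A direct check gives $F_{\tau^{-1}}\circ F_\tau = \mathrm{id}$ and $F_\tau\circ F_{\tau^{-1}} = \mathrm{id}$ on the nose: the shifts cancel because $L_{\tau(\alpha)}L_{\tau(\alpha)^{-1}}=\mathrm{id}$ in the action and $\lambda\,\tau(\alpha^{-1})\,\tau(\alpha^{-1})^{-1}=\lambda$ in the operation. The main obstacle is isolating the correct twist formulas: since the left half of (C3) depends only on $\alpha$ while the right half also involves $\beta$, one must simultaneously twist the $H^0$-action by $L_{\tau(\alpha)^{-1}}$ (to absorb the $\beta$-dependent contributions) and shift the $\lambda$-product by $\tau(\alpha^{-1})$ (to match the $\alpha$-only left half); once these formulas are pinned down, the rest reduces to the two specializations of \eqref{eq:Coboundary} displayed above.
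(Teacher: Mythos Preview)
Your proposal is correct and follows essentially the same approach as the paper: the paper defines the identical twist, namely $h\cdot a=(L_{\tau(\alpha)^{-1}}h)a$ on $C_\alpha$ and $(a{}_{[\lambda]}b)=(a\oo{\lambda\tau(\alpha^{-1})}b)$, and then simply states that ``straightforward computations'' verify (C3) and that the correspondence is ``obviously'' an equivalence. Your write-up supplies those computations (the two specializations of \eqref{eq:Coboundary}) and the explicit inverse $F_{\tau^{-1}}$, which is more than the paper bothers to record.
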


\begin{proof}
Suppose $C=\bigoplus\limits_{\gamma\in \Gamma }C_\gamma $
is a $(\Gamma, \sigma, \varphi)$-graded conformal algebra, and
$\varphi'\in Z^2(\Gamma, G^0,\varphi)$ is related with $\varphi $
by  \eqref{eq:Coboundary}.
Then we may define a new structure of an $H^0$-module and a new family
of operations $(\cdot{}_{[\lambda ]}\cdot)$ on $C$ by the following rule:
\[
h\cdot a = (L_{\tau(\alpha)^{-1}}h)a,
\quad
(a{}_{[\lambda]}b)= (a\oo{\lambda\tau(\alpha^{-1})}b)
\]
for $a\in C_\alpha$, $b\in C$, $\lambda \in G^0$, $h\in H^0$.
Straightforward computations show this new system is
a $(\Gamma, \sigma ,\varphi')$-graded conformal algebra over $G^0$
(it is enough to check the condition (C3) of Definition~\ref{defn:GradedConformalG0}).
The correspondence constructed is obviously an equivalence of categories.
\end{proof}

\begin{exmp}
If $G^0$ is the trivial group then $\sigma $ and $\varphi$ are also trivial,
so Definition~\ref{defn:GradedConformalG0} describes the class of
$\Gamma $-graded ordinary algebras over~$\Bbbk $.
\end{exmp}

\begin{exmp}
Let $A=\bigoplus\limits_{\gamma\in \Gamma }A_\gamma $
be an (ordinary) $\Gamma $-graded algebra. Then the free
$H^0$-module $C=H^0\otimes A$ can be endowed with a natural structure
of a $(\Gamma ,\sigma,\varphi)$-graded conformal algebra over $G^0$
via
\[
(h\otimes a)\oo{\lambda }(g\otimes b) =
h((\lambda^{-1}\varphi(\alpha, \alpha^{-1})^{-1})^{\sigma(\alpha)})
(L_{\lambda^{\sigma(\alpha\beta)}\varphi(\alpha^{-1},\alpha\beta)}g)\otimes ab,
\]
$h,g\in H^0$, $a\in A_\alpha$, $b\in A_\beta $.
The graded conformal algebra $C$ obtained is called the
{\em current conformal algebra\/} over $A$, it is denoted by $\Cur A$.
\end{exmp}

The following example explains the origin of Definition \ref{defn:GradedConformalG0}.

\begin{exmp}
Consider a linear algebraic group $G$
with abelian identity component $G^0$. Denote
by $\Gamma $ the quotient group $G/G^0$.
Then there exist $\sigma: \Gamma \to \Aut G^0$ and
$\varphi \in Z^2(\Gamma , G^0, \sigma )$ such that
$G$ is isomorphic to the set $\Gamma \times G^0$ endowed with
the product
\[
(\gamma,\lambda)(\beta,\mu)
 = (\gamma\beta,  \lambda^{\sigma(\beta)} \mu\varphi(\gamma, \beta) ),
\]
$\gamma, \beta \in \Gamma$, $\lambda, \mu \in G^0$.

The Hopf algebra $H=\Bbbk[G]$
 of regular functions on $G$
may be presented as the algebra
 $(\Bbbk \Gamma)^*\otimes H^0$
with coproduct and antipode depending on $\sigma $ and $\varphi $.
Here $(\Bbbk\Gamma)^*$ is the dual group algebra of $\Gamma $
spanned by functionals $T_\gamma: \Gamma \to \Bbbk $,
$\langle T_\gamma, \beta \rangle =\delta_{\gamma,\beta}$,
$\gamma,\beta \in \Gamma $.

Suppose $C$ is a conformal algebra over $G$ \cite{Kol2008(TG)},
i.e., a pseudo-algebra over $H$ \cite{BDK2001}.
The natural decomposition
$C=\bigoplus\limits_{\gamma \in \Gamma } C_\gamma$,
$C_\gamma =\{x\in C\mid (T_\gamma\otimes 1) x=x \}$,
defines a $\Gamma $-grading on the $H^0$-module~$C$.
Moreover, the operations
\[
(a\oo{\lambda }b) = (a\oo{\alpha^{-1},\lambda} b),
\quad a\in C_\alpha,\  b\in C,\  \lambda \in G^0,
\]
turn $C$ into a $(\Gamma, \sigma, \varphi)$-graded conformal algebra over
$G^0$.

Conversely, every $(\Gamma, \sigma, \varphi)$-graded conformal algebra
over $G^0$ (for a finite group $\Gamma $) is a conformal algebra
over the corresponding extension of $G^0$.
\end{exmp}

Let $V=\bigoplus\limits_{\gamma \in \Gamma } V_\gamma $ be a
graded linear space. Then the algebra $\End V$ of linear
transformations of $V$ carries the following natural grading:
\begin{gather}\label{eq:NatGrading}
\End V = \bigoplus\limits_{\gamma \in \Gamma } (\End V)_\gamma, \\
(\End V)_\gamma = \{\xi\in \End V \mid
 \xi(V_\alpha)\subseteq V_{\gamma\alpha},\ \alpha\in \Gamma \}.
\end{gather}

A map $a:G^0\to \End V$ is called locally regular if for every $v\in V$
there exists an element $(a\oo{x} v)=\sum\limits_i h_i\otimes v_i\in H^0\otimes V$
such that $a(\lambda )v = \sum\limits_i f_i(\lambda )v_i$ for all $\lambda \in G^0$.

\begin{exmp}
Let $M=\bigoplus\limits_{\gamma \in \Gamma } M_\gamma $ be a
graded $H^0$-module.
Denote by $(\Cend^\Gamma M)_\gamma $ the space of all locally regular
maps $a: G^0\to (\End M)_\gamma $ such that
\[
a(\lambda )(hu)=
 (L_{\lambda^{\sigma(\gamma\beta)}\varphi(\gamma^{-1}, \gamma\beta)}h)a(\lambda )u
\]
for all $h\in H^0$, $u\in M_\beta$, $\lambda \in G^0$.

In fact, the structure of $(\Cend^\Gamma M)_\gamma $ depends also
on $\sigma $ and $\varphi $, but we use the single letter $\Gamma $
in order to simplify notations. We will later use $\Cend $ without a
superscript for the space of ``non-graded''
conformal linear maps, as in~\cite{Kac1998}.

For each $\gamma \in \Gamma$ the space $(\Cend^\Gamma  M)_\gamma $ carries the
following $H^0$-module structure:
\[
(ha):\lambda \to h((\lambda^{-1}\varphi(\gamma, \gamma^{-1})^{-1})^{\sigma(\gamma )}).
\]
Consider the
formal direct sum of all $(\Cend^\Gamma M)_\gamma $ as a
graded $H^0$-module denoted by $\Cend^\Gamma M$. There exists a natural
family of bilinear operations  $(\cdot\oo{\lambda }\cdot)$ on
$\Cend^\Gamma M$, namely,
\begin{equation}\label{eq:CendOperations}
(a\oo{\lambda }b)(\mu ) =
 a(\lambda )b((\lambda^{-1}\mu\varphi(\beta^{-1},\alpha^{-1})^{-1})^{\sigma(\alpha)}  )
\end{equation}
for $a\in (\Cend^\Gamma M)_\alpha$, $b\in (\Cend^\Gamma M)_\beta $,
$\lambda ,\mu \in G^0$.
It is easy to check that the operations given by
\eqref{eq:CendOperations} satisfy the conditions (C2) and (C3)
of Definition~\ref{defn:GradedConformalG0}.
Moreover, if $M$ is a finitely generated $H^0$-module then
the condition (C1) also holds. Indeed,
for a fixed system of homogeneous generators $u_1,\dots, u_n\in M$,
every map $a\in \Cend^\Gamma M$ is completely  defined by no more than $n^2$
 regular functions $h_{ij}\in H^0$.
Indeed, it is enough to consider
\[
a(x)u_i = \sum\limits_{j=1}^n f_{ij}\otimes u_j, \quad i=1,\dots, n.
\]
Hence, $(a\oo{x} b)$ can also be considered as an element in
$H^0\otimes \Cend^\Gamma M$.
\end{exmp}

For every $(\Gamma, \sigma,\varphi)$-graded conformal algebra $C$ over
$G^0$ there exists a natural homogeneous $H^0$-linear map
$\ell: C\to \Cend^\Gamma C$,
$\ell(a):\lambda \mapsto (a\oo{\lambda }\cdot) \in \End C$,
$a\in C$, $\lambda \in G^0$.
If $\ell$ preserves operations
$(\cdot\oo{\lambda}\cdot)$ for all $\lambda \in G^0$ then $C$ is
said to be {\em associative}. This definition is similar to the case of ordinary algebras
when $\Cend $ means the same as $\End $.
Thus, the axiom of associativity for graded conformal algebras has the
following form:
\begin{equation}\label{eq:Assoc}
(a\oo{\lambda }b)\oo{\mu }c =
 (a\oo{\lambda}
(b
\oo{(\lambda^{-1}\mu\varphi(\beta^{-1},\alpha^{-1})^{-1})^{\sigma(\alpha)} }c)),
\end{equation}
$a\in C_\alpha$, $b\in C_\beta$, $c\in C$, $\lambda, \mu \in G^0$.

In particular, if $A$ is an associative graded algebra then so is
$\Cur A$, if $M$ is a finitely generated graded $H^0$-module then
$\Cend^\Gamma M$ is an associative graded conformal algebra.

Let $M=\bigoplus\limits_{\gamma \in \Gamma} M_\gamma $
be a finitely generated graded $H^0$-module.
A representation $\rho $ of an associative $(\Gamma,\sigma, \varphi)$-graded
conformal algebra
$C$ over $G^0$ on $M$ is a homomorphism $\rho :C\to \Cend^\Gamma M$ of
$(\Gamma,\sigma,\varphi)$-graded conformal algebras over $G^0$.
Given a representation of $C$ on $M$, the latter is called a graded
conformal $C$-module.

As in the case of ordinary algebras, we use the following convention:
$\rho(a)(\lambda )v = a\oo{\lambda } v$, $a\in C$,
$v\in M$.
If $\rho $ is injective then the $C$-module $M$ is said to be faithful;
if $\rho \ne 0$ and there are
no nonzero proper graded $C$-invariant $H^0$-submodules in $M$
then $M$ is called irreducible.
Irreducibility of $M$ means that
for every $0\ne v\in M_\gamma $ and for every $u\in M_{\alpha\gamma}$
($\alpha, \gamma\in \Gamma $) there exist
$h_i\in H^0$, $a_i\in C_\alpha$, $\lambda _i\in G^0$, $i=1,\dots, n$,
such that
\[
u=\sum\limits_{i=1}^n h_i(a_i \oo{\lambda_i} v).
\]

\begin{defn}\label{defn:IrredSubalgebra}
A graded subalgebra $C$ of the $(\Gamma , \sigma, \varphi)$-graded
conformal algebra $\Cend^\Gamma M$ is irreducible if
$M$ is an irreducible graded conformal $C$-module.
\end{defn}

If $C\oo{\lambda }C\ne 0$ for some $\lambda \in G^0$ and
$C$ has no non-zero proper graded ideals
then $C$ is said to be simple.
A graded ideal $I$ of $C$ is abelian if $I\oo{\lambda }I=0$
for all $\lambda \in G^0$.
If $C$ has no nonzero abelian graded ideals then $C$ is called semisimple.

\begin{lem}\label{lem:TorsionAnnihilates}
Let $C$ be a $(\Gamma, \sigma, \varphi)$-graded associative
conformal algebra over $G^0$, $U$ be a graded conformal $C$-module,
and let $W\subseteq U$ be a graded $C$-submodule such that
$hU\subseteq W$ for some $h\in H^0$.
Then $(C\oo{\lambda } U) \subseteq W$ for all $\lambda \in G^0$.
\end{lem}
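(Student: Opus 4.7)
The plan is to reduce to homogeneous elements, use axiom (C3) to transport $h$ across the $\lambda$-product, and then extract coefficients in a polynomial identity in $\lambda$.

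First, since $C\oo{\lambda}U$ is $\Bbbk$-spanned by products $a\oo{\lambda}u$ with $a\in C_\alpha$ and $u\in U_\beta$ homogeneous, it suffices to show each such element lies in $W$. Because $hu\in W$ and $W$ is a graded $C$-submodule, $a\oo{\lambda}(hu)\in W$; on the other hand the second identity of (C3) applied to $U$ yields
\[
a\oo{\lambda}(hu) \;=\; (L_{g(\lambda)}h)\,(a\oo{\lambda}u),\qquad g(\lambda):=\lambda^{\sigma(\alpha\beta)}\varphi(\alpha^{-1},\alpha\beta).
\]
Combining these gives the key intermediate identity $(L_{g(\lambda)}h)(a\oo{\lambda}u)\in W$ for every $\lambda\in G^0$.

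Next, I would specialize to the principal case $G^0=\mathbb{A}^1=(\Bbbk,+)$ with $H^0=\Bbbk[x]$. Here $\sigma(\alpha\beta)$ acts as multiplication by some $c\in\Bbbk^\ast$ and, with $\tau:=\varphi(\alpha^{-1},\alpha\beta)\in\Bbbk$, one has $(L_{g(\lambda)}h)(x)=h(c\lambda+\tau+x)$. Writing $h(y)=\sum_{j=0}^d h_j y^j$ with $h_d\ne 0$ and, by (C1), $a\oo{\lambda}u=\sum_{k=0}^N \lambda^k u_k$ with $u_k\in U$, the intermediate identity becomes a polynomial identity in $\lambda$ whose coefficients (as elements of $U$) must individually lie in $W$ since $\Bbbk$ is infinite. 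The coefficient of $\lambda^{N+d}$ is the nonzero scalar $h_d c^d$ times $u_N$, so $u_N\in W$; for $m\ge 1$, the coefficient of $\lambda^{N+d-m}$ equals $h_d c^d u_{N-m}$ plus a $\Bbbk[x]$-combination of $u_{N-m+1},\dots,u_N$, already in $W$ by induction (using that $W$ is $H^0$-invariant), forcing $u_{N-m}\in W$. A descending induction gives $u_k\in W$ for every $k$, hence $a\oo{\lambda}u\in W$.

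The only delicate point---and the main obstacle if one aims at a fully general abelian connected $G^0$---is the isolation of $u_N$ with a nonzero scalar prefactor. On the affine line this is immediate because the leading coefficient (in $\lambda$) of $L_{g(\lambda)}h$ is an element of $\Bbbk$. For a more exotic $G^0$ one would instead decompose $L_{g(\lambda)}h$ via the coproduct of $H^0$ as $\sum h^{(1)}(g(\lambda))\,h^{(2)}$, regard both sides of the intermediate identity as elements of $H^0\otimes(U/W)$, and run a Sweedler-type argument in place of the degree filtration; that bookkeeping is where the only real care is required.
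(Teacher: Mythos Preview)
Your argument for $G^0=\mathbb{A}^1$ is correct: the leading coefficient in $\lambda$ of $L_{g(\lambda)}h$ is the nonzero scalar $h_dc^d$, so the descending induction on degree isolates each $u_k$ as claimed. The lemma, however, is stated (and later used, e.g.\ in Proposition~\ref{prop:CendEmbedding} and Lemma~\ref{lem:MinimalFaithful}) for an arbitrary abelian connected $G^0$, and your rigorous treatment covers only the affine line.

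The paper handles the general case by carrying out precisely the Sweedler-type argument you sketch in your final paragraph. It introduces the invertible $\Bbbk$-linear map
\[
\Psi_{\alpha,\beta}:H^0\otimes U\to H^0\otimes U,\qquad f\otimes v\mapsto\sum_{(f)}s_{\alpha,\beta}(f_{(1)})\otimes f_{(2)}v,
\]
where $s_{\alpha,\beta}$ is a certain algebra automorphism of $H^0$, checks the key identity $\Psi_{\alpha,\beta}^{-1}(a\oo{x}hu)=(h\otimes 1)\,\Psi_{\alpha,\beta}^{-1}(a\oo{x}u)$, and then cancels $h\otimes 1$ using that $H^0$ is a domain (here connectedness of $G^0$ enters). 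Applying $\Psi_{\alpha,\beta}$ returns $(a\oo{x}u)$ to $H^0\otimes W$. Your polynomial degree filtration is exactly the shadow of $\Psi_{\alpha,\beta}^{-1}$ when $H^0=\Bbbk[x]$: the map undoes the shift $L_{g(\lambda)}$ so that the factor $h$ sits unshifted in the first tensor slot, after which the ``leading coefficient'' step becomes simply the integrality of $H^0$. So the two proofs are the same idea at different levels of abstraction; yours trades generality for an argument free of Hopf-algebra bookkeeping, while the paper's formulation works uniformly for all abelian connected $G^0$.
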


\begin{proof}
Consider an arbitrary pair
of homogeneous elements $a\in C_\alpha $,
$u\in U_\beta$, where $\alpha, \beta \in \Gamma $.
Suppose $(a\oo{x} u)=\sum_i h_i\otimes v_i\in H^0\otimes U$.
Then
\[
(a\oo{x} hu) = \sum\limits_{i, (h)} h_i s_{\alpha,\beta}(h_{(1)})\otimes h_{(2)}v_i,
\]
where $\sum\limits_{(h)} h_{(1)}\otimes h_{(2)}$ is the Sweedler's notation
for the standard coproduct in $H^0$,
$s_{\alpha,\beta }$ is the following automorphism of the algebra $H^0$:
\[
s_{\alpha,\beta }(f):\lambda
 \mapsto (L_{\varphi(\alpha^{-1},\alpha\beta)}f)(\lambda ^{\sigma(\alpha\beta)}),
\quad
f\in H^0,\ \lambda \in G^0.
\]

Consider the linear map $\Psi_{\alpha,\beta}:H^0\otimes U\to H^0\otimes U$
defined as follows:
\[
\Psi_{\alpha,\beta}(f\otimes v)
 = \sum\limits_{(f)} s_{\alpha,\beta}(f_{(1)})\otimes f_{(2)}v,
\quad v\in U.
\]
This map is invertible since
\[
\Psi^{-1}_{\alpha,\beta}(f\otimes v)
 = \sum\limits_{(s_{\alpha,\beta}^{-1}(f))}
 (s_{\alpha,\beta}^{-1}(f))_{(1)}\otimes S((s_{\alpha,\beta}^{-1}(f))_{(2)})v,
\]
where $S$ is the standard antipode on $H^0$.

Since $(a\oo{x} hu) \in H^0\otimes W \subset H^0\otimes U$, we also have
\begin{equation}\label{eq:FourierTransform}
\Psi^{-1}_{\alpha,\beta}(a\oo{x}hu)
=
(h\otimes 1)\Psi^{-1}_{\alpha,\beta}(a\oo{x} u) \in H^0\otimes W.
\end{equation}
The group $G^0$ is connected and $H^0$ has no zero divisors.
Therefore, \eqref{eq:FourierTransform} implies
$\Psi^{-1}_{\alpha,\beta}(a\oo{x} u)\in H^0\otimes W$.
By the definition of $\Psi_{\alpha,\beta}$,
$(a\oo{x} u) \in H^0\otimes W$.
\end{proof}

\begin{prop}\label{prop:CendEmbedding}
Let $C$ be a finite simple $(\Gamma,\sigma, \varphi)$-graded
conformal algebra over $G^0$.
Then there exists a finite faithful irreducible graded
conformal $C$-module $M$ which is a torsion-free $H^0$-module.
\end{prop}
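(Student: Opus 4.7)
The plan is to realize $M$ as an appropriate quotient of $C$ viewed as the regular left module over itself. Associativity \eqref{eq:Assoc} shows that $C$ with the action $(a,b)\mapsto (a\oo{\lambda}b)$ is a graded conformal $C$-module. Since $H^0$ is the coordinate ring of an affine algebraic group, it is a finitely generated commutative $\Bbbk$-algebra, hence Noetherian; $C$ being finite over $H^0$ is therefore a Noetherian $H^0$-module, and a standard argument produces a maximal proper graded $C$-submodule $N\subset C$. Set $M:=C/N$, a finite irreducible graded conformal $C$-module by construction. Let $C^2$ denote the $H^0$-submodule of $C$ generated by all products $a\oo{\lambda}b$ for $a,b\in C$, $\lambda\in G^0$. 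Axioms (C2), (C3) and \eqref{eq:Assoc} imply that $C^2$ is a graded two-sided ideal of $C$, and since $(C\oo{\lambda}C)\ne 0$ for some $\lambda$, simplicity forces $C^2=C$.

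For faithfulness, the annihilator $I:=\{a\in C\mid (a\oo{\lambda}M)=0\ \text{for all }\lambda\in G^0\}$ is readily seen from (C3) and \eqref{eq:Assoc} to be a graded two-sided ideal of $C$. If $I=C$, then $(C\oo{\lambda}C)\subseteq N$ for every $\lambda$, giving $C^2\subseteq N\subsetneq C$, contrary to $C^2=C$. Hence simplicity yields $I=0$, so the induced representation of $C$ on $M$ is faithful.

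For torsion-freeness, let $T(M)\subseteq M$ be the $H^0$-torsion submodule; it is graded because the $H^0$-action preserves the $\Gamma$-grading. To see that $T(M)$ is $C$-stable, take a homogeneous $m\in T(M)_\beta$ with $hm=0$ for some $0\ne h\in H^0$; for any $a\in C_\alpha$ and $\lambda\in G^0$ the second relation of (C3) gives
\[
0 \;=\; (a\oo{\lambda} hm) \;=\; \bigl(L_{\lambda^{\sigma(\alpha\beta)}\varphi(\alpha^{-1},\alpha\beta)}h\bigr)(a\oo{\lambda}m),
\]
and the shift of a nonzero function is again nonzero, so $(a\oo{\lambda}m)\in T(M)$. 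Irreducibility of $M$ then forces $T(M)=0$ or $T(M)=M$. In the latter case $M$ is finite and $H^0$-torsion over the Noetherian domain $H^0$, so some $0\ne h\in H^0$ annihilates all of $M$, i.e.\ $hC\subseteq N$; applying Lemma~\ref{lem:TorsionAnnihilates} with $U=C$, $W=N$ yields $(C\oo{\lambda}C)\subseteq N$ for all $\lambda$, hence $C^2\subseteq N$, again contradicting $C^2=C$. Therefore $T(M)=0$, as required. The main technical input is Lemma~\ref{lem:TorsionAnnihilates}, which is exactly what turns the potential $H^0$-torsion obstruction into a contradiction with simplicity; the remaining verifications (that $C^2$, $I$ and $T(M)$ behave as graded $C$-subobjects) are routine consequences of axioms (C2), (C3) and \eqref{eq:Assoc}.
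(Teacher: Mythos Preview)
Your proof is correct and follows essentially the same route as the paper's: take $M=C/N$ for a maximal proper graded left ideal $N$, deduce faithfulness from simplicity (the paper simply calls this ``obvious''), and use Lemma~\ref{lem:TorsionAnnihilates} to exclude the possibility that $M$ is entirely $H^0$-torsion. The only cosmetic difference is that you verify directly that $T(M)$ is a graded $C$-submodule via the module form of (C3), whereas the paper phrases the same step in terms of the preimage $I_1=\{a\in C\mid a+N\in\mathrm{Tor}_{H^0}(C/N)\}$ and the maximality of~$N$.
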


\begin{proof}
Since $H^0$ is a Noetherian algebra, $C$ is a Noetherian
$H^0$-module. There exists a maximal graded left ideal $I$ of $C$.
Then $M=C/I$ is an irreducible graded conformal $C$-module which
is obviously faithful.
%Its kernel would be a graded ideal of~$C$.

Moreover, $M$ is a torsion-free $H^0$-module.
Indeed, consider the graded left ideal
 $I_1=\{a\in C\mid a+I \in \mathrm{Tor}_{H^0}(C/I)\}$
which contains~$I$. Since $I$ is maximal, we have either
$I_1=I$ or $I_1=C$. The first option means
$\mathrm{Tor}_{H^0}(C/I)=0$, so assume the second holds.

In this case, there exists $h\in H^0\setminus \{0\}$
such that $hC\subseteq I$.
By Lemma \ref{lem:TorsionAnnihilates},
$(C\oo{\lambda } C)\subseteq I$
for all $\lambda \in G^0$, which is impossible for a simple algebra.
\end{proof}

\begin{prop}\label{prop:IrreducibleSemisimple}
Let $C$ be an irreducible graded subalgebra of
the $(\Gamma, \sigma, \varphi)$-graded conformal algebra $\Cend^\Gamma M$
(over $G^0$)
for a finitely generated graded $H^0$-module $M$.
Then the homogeneous component $C_e$
 is a semisimple conformal algebra over $G^0$.
\end{prop}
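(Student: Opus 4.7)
The plan is to show that any nonzero abelian graded ideal $J$ of $C_e$ must in fact be zero; that is exactly semisimplicity of $C_e$. The whole argument is driven by the fact that the associativity identity \eqref{eq:Assoc} collapses dramatically when the leftmost factor lies in $C_e$: taking $\alpha=e$, one has $\sigma(e)=\mathrm{id}$ and $\varphi(\beta^{-1},e)=e$, so the twist disappears and the identity reduces to
\[
a\oo{\lambda}(b\oo{\mu}c) \;=\; (a\oo{\lambda}b)\oo{\lambda\mu}c, \qquad a\in C_e.
\]
This clean form is what lets me treat each homogeneous component $M_\gamma$ as an ordinary module over $C_e$ and transfer the ideal structure of $J$ to annihilators inside $M$.

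First I will observe that every nontrivial $M_\gamma$ is irreducible as a $C_e$-module: specialising to $\alpha=e$ the irreducibility condition for $M$ stated just before Definition~\ref{defn:IrredSubalgebra} shows that any nonzero $v\in M_\gamma$ generates $M_\gamma$ under the joint action of $H^0$ and $C_e$. Next, for a nonzero abelian (two-sided) ideal $J\subseteq C_e$ and each $\gamma\in\Gamma$, I will introduce
\[
N_\gamma \;=\; \{\,v\in M_\gamma \mid a\oo{\lambda}v=0 \text{ for all } a\in J,\ \lambda\in G^0\,\}.
\]
Property (C3) makes $N_\gamma$ an $H^0$-submodule, while the displayed associativity together with $C_e\oo{\lambda}J\subseteq J$ shows that $N_\gamma$ is also $C_e$-invariant; irreducibility of $M_\gamma$ then forces $N_\gamma\in\{0,M_\gamma\}$.

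Assume, toward a contradiction, that $N_\gamma=0$ while $M_\gamma\ne 0$. Consider $\tilde W_\gamma$, the $H^0$-span of $\{\,a\oo{\lambda}v:a\in J,\ \lambda\in G^0,\ v\in M_\gamma\,\}$. The same simplified associativity, combined with $J\oo{\mu}J=0$, gives $a\oo{\mu}(b\oo{\lambda}v)=(a\oo{\mu}b)\oo{\mu\lambda}v=0$ for $a,b\in J$, so $J$ annihilates $\tilde W_\gamma$; the condition $C_e\oo{\mu}J\subseteq J$ (applied to $c\oo{\mu}(b\oo{\lambda}v)=(c\oo{\mu}b)\oo{\mu\lambda}v$) shows $\tilde W_\gamma$ is also $C_e$-invariant. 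Hence $\tilde W_\gamma\subseteq N_\gamma=0$, forcing every $b\oo{\lambda}v$ to vanish and therefore $M_\gamma\subseteq N_\gamma=0$, which contradicts $M_\gamma\ne 0$. Consequently $N_\gamma=M_\gamma$ for every $\gamma$, i.e.\ $J$ annihilates $M$, and since $C\subseteq\Cend^\Gamma M$ this yields $J=0$.

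The main obstacle, and the reason the usual ``annihilator is a $C$-submodule'' argument cannot be applied directly to $M$ as a $C$-module, is that $J$ is assumed to be an ideal only in $C_e$ rather than in all of $C$. The way around this is to split $M$ into homogeneous components and work with each $M_\gamma$ as a module over $C_e$; the twist-free form of associativity when the first argument lies in $C_e$ is precisely what makes both $N_\gamma$ and $\tilde W_\gamma$ legitimate $C_e$-submodules and allows the abelian hypothesis $J\oo{}J=0$ to close the argument.
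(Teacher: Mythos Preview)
Your proof is correct and follows essentially the same strategy as the paper: both use that each $M_\gamma$ is irreducible over $C_e$, then show that an abelian ideal $J\triangleleft C_e$ annihilates every $M_\gamma$ (the paper via the dichotomy $JM_\gamma\in\{0,M_\gamma\}$ together with $J(JM_\gamma)=0$, you via the annihilator $N_\gamma$ and the auxiliary submodule $\tilde W_\gamma=JM_\gamma\subseteq N_\gamma$), and conclude $J=0$ by faithfulness of $M$. Your version is somewhat more explicit about why the associativity identity simplifies at $\alpha=e$, but the underlying argument is the same; one small slip is that in checking $N_\gamma$ is $C_e$-invariant you actually need $J\oo{\lambda}C_e\subseteq J$ rather than $C_e\oo{\lambda}J\subseteq J$, though of course both hold since $J$ is a two-sided ideal.
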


\begin{proof}
For an arbitrary
$0\ne v\in M_\gamma $, the irreducibility of $C$ implies
\[
C_e v := H^0\{a\oo{\lambda } v\mid a\in C_e, \, \lambda \in G^0\} =  M_\gamma .
\]
Suppose $I$ is an abelian ideal of $C_e$, $I\ne 0$.
If $0\ne I v\ne M_\gamma $ then $C_e (I v)\subseteq Iv \ne M_\gamma $
which contradicts the irreducibility of $M$. Therefore, for each
$\gamma \in \Gamma $ either $IM_\gamma =0$ or $IM_\gamma =M_\gamma $.
Since $M$ is faithful and $I\ne 0$, there exists at least one
 $\gamma \in \Gamma $
such that $M_\gamma \ne 0$ and $IM_\gamma =M_\gamma $.
But then $M_\gamma =I(IM_\gamma )=0  $, a contradiction.
\end{proof}

\section{Irreducible graded algebras of linear transformations}

Suppose $V=\bigoplus\limits_{\gamma \in \Gamma } V_\gamma $
is a finite-dimensional $\Gamma $-graded linear space.
Then $\End V$ is a $\Gamma $-graded associative algebra
with respect to the natural grading \eqref{eq:NatGrading}.
In this section, we consider the following problem: Describe
{\em graded irreducible} subalgebras of $\End V$,
i.e., graded subalgebras $A\subseteq \End V$ such that $V$ has no
nonzero proper graded $A$-invariant subspaces. (In the non-graded
settings, the answer is given by the classical Burnside Theorem.)
The case $\Gamma =\mathbb Z_2$ was considered in \cite{Ber1987}.
If $\Gamma $ is a finite group then this problem naturally fits within
the frames of conformal algebras over $\Gamma $; it was solved in
\cite{Kol2008(TG)}. Here we consider the general case.

Without loss of generality, we may assume $V_e\ne 0$. Otherwise, we may
shift the grading on $V$ by setting $V_\gamma'=V_{\gamma g}$ for a fixed
element $g\in \Gamma $. The corresponding grading on $\End V$ and
its graded irreducible subalgebras remain the same.

In the case $|\Gamma |<\infty$, this problem was solved in \cite{Kol2008(TG)}
in the context of conformal algebras over $\Gamma $.
If $\Gamma $ is an infinite group then $(\Bbbk\Gamma )^*$ is not a Hopf algebra,
but we may still obtain a very similar result.

Let $T_\gamma $, $\gamma \in \Gamma $, stands for the projection
$V\to V_\gamma $ relative to the given grading on~$V$.
Denote by $\Gamma _0$ the set of all $\gamma \in \Gamma $ such that
$V_\gamma =0$.
The identity operator $\sum\limits_{\gamma\in \Gamma }T_\gamma $
is actually presented by finite sum since almost all $T_\gamma $
are zero.

A subgroup $\Gamma_1\le \Gamma $ is said to be {\em fine\/} if
\begin{equation}\label{eq:Splitting}
\Gamma \setminus \Gamma_0=
\Gamma_1 \cup \Gamma_2 \cup
\dots \cup \Gamma_p ,\quad \Gamma_k = \gamma_k\Gamma _1.
\end{equation}
Let us fix the system of representatives $\gamma_1,\dots, \gamma_p\in \Gamma $
assuming $\gamma_1=e$.

Denote by $Z(\Gamma,\Gamma_0,\Gamma_1)$ the set of all maps
$\chi : \Gamma\times \Gamma \to \Bbbk$ such that:
\begin{gather}
\chi(\gamma, \beta) =0,\quad \beta\in \Gamma_0\ \mbox{or}\
  \gamma\beta\in \Gamma_0,\nonumber \\
\chi(\gamma, \alpha\gamma_k)\chi(\gamma\alpha, \beta) =
 \chi(\gamma, \alpha\beta)\chi(\alpha, \beta),
\quad \alpha,\gamma\in \Gamma,\ \beta\in \Gamma_k,
                                            \label{eq:ZCondition}
\end{gather}
and for $\gamma,\beta \notin \Gamma_0$ we have
\[
\begin{gathered}
\chi(\gamma, \beta)=1, \quad \beta=\gamma_k \ \mbox{or} \ \gamma =e, \\
\chi(\gamma, \beta)\ne 0\quad \mbox{otherwize}.
\end{gathered}
\]
For example, if $\Gamma_1=\Gamma $ then
$Z(\Gamma,\Gamma_0,\Gamma_1)=Z^2(\Gamma, \Bbbk^*)$. A restriction
of $\chi \in Z(\Gamma,\Gamma_0,\Gamma_1)$ to $\Gamma_1\times \Gamma _1$
is a 2-cocycle from $Z^2(\Gamma_1, \Bbbk^* )$. Moreover, for every
$\theta \in Z^2(\Gamma_1, \Bbbk^*)$ such that $\theta(e,e)=1$
the function
\[
\chi(\gamma, \beta) = \begin{cases}
\theta(\gamma_q^{-1}\gamma \gamma_k, \gamma_k^{-1}\beta), &
     \beta\in \Gamma_k, \ \gamma\beta\in \Gamma_q, \\
0, & otherwize,
\end{cases}
\]
 belongs to $Z(\Gamma,\Gamma_0,\Gamma_1)$.

\begin{thm}\label{thm:A_Structure}
Let $A$ be a graded irreducible subalgebra of $\End V$.
Then there exist:
\begin{itemize}
\item
A fine subgroup $\Gamma_1\le \Gamma $ with a family of $\gamma_k\in \Gamma $,
$k=1,\dots, p$, satisfying \eqref{eq:Splitting};
\item
A function $\chi\in Z(\Gamma,\Gamma_0,\Gamma_1)$;
\item
A family of linear isomorphisms
$\iota_\gamma:V_{\gamma_k}\to V_{\gamma}$, $\gamma \in \Gamma_k$, where
$\iota_{\gamma_k}=\mathrm{id}_{V_{\gamma_k}}$,
\end{itemize}
such that
\begin{equation}\label{eq:A-Structure}
 A_\alpha =
 \left\{\sum\limits_{k=1}^p \sum\limits_{\gamma\in \Gamma_k}
 \chi(\alpha, \gamma)\iota_{\alpha\gamma}\iota_{\alpha\gamma_k}^{-1}a_k
 \iota_\gamma^{-1} T_\gamma
\mid
a_k\in \mathrm{Hom}\,(V_{\gamma_k}, V_{\alpha\gamma_k})\right\},
\quad \alpha \in \Gamma.
\end{equation}
\end{thm}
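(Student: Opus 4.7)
My strategy is to study $A$ through its degree-$e$ subalgebra $A_e$ and to analyze each graded piece $A_\alpha$ as an $A_e$-bimodule.

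First, graded irreducibility of $A$ on $V$ forces $A_e$ to act irreducibly on every $V_\gamma$ with $\gamma\notin\Gamma_0$: for $0\neq v\in V_\gamma$ one has $Av=V$, and the $V_\gamma$-component of this equality gives $A_e v=V_\gamma$. Burnside's theorem (over the algebraically closed ground field implicitly assumed here) then makes each projection $A_e\to\End V_\gamma$ surjective. Declaring $\gamma\sim\gamma'$ iff $V_\gamma\cong V_{\gamma'}$ as $A_e$-modules and picking representatives $\gamma_1=e,\gamma_2,\ldots,\gamma_p$ for the resulting equivalence classes $\mathcal{C}_k$, one obtains $A_e\cong\prod_{k=1}^p\End V_{\gamma_k}$. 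I then fix $A_e$-linear isomorphisms $\iota_\gamma:V_{\gamma_k}\to V_\gamma$ for $\gamma\in\mathcal{C}_k$ with $\iota_{\gamma_k}=\mathrm{id}$; these are unique up to scalars by Schur's lemma.

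Next, using the central idempotents $f_k\in A_e$ of the factors $\End V_{\gamma_k}$, decompose $A_\alpha=\bigoplus_{k,k'}f_{k'}A_\alpha f_k$. Each summand embeds into $\bigoplus_{\gamma\in S_{k,k',\alpha}}\Hom(V_\gamma,V_{\alpha\gamma})$ for $S_{k,k',\alpha}=\{\gamma\in\mathcal{C}_k:\alpha\gamma\in\mathcal{C}_{k'}\}$; after transport by the $\iota$'s it becomes a sub-bimodule of a diagonal direct sum of copies of $\Hom(V_{\gamma_k},V_{\gamma_{k'}})$, which by Schur's lemma must take the form $W_{k,k',\alpha}\otimes\Hom(V_{\gamma_k},V_{\gamma_{k'}})$ for some subspace $W_{k,k',\alpha}\subseteq\Bbbk^{|S_{k,k',\alpha}|}$. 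The central claim, and the main technical obstacle, is that for each $(k,\alpha)$ there is at most one $k'$ with $W_{k,k',\alpha}\neq 0$ and that in that case $\dim W_{k,k',\alpha}=1$. I intend to establish both by showing that a violation would produce a proper graded $A$-invariant subspace of $V$, contradicting irreducibility.

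Given this rank-one/uniqueness property, the remaining pieces fall into place. Uniqueness of $k'$ turns left multiplication by $A_\alpha$ into a well-defined function on the set of equivalence classes; for $\alpha\in\mathcal{C}_1$ it sends $e$ to $\alpha\in\mathcal{C}_1$, hence carries all of $\mathcal{C}_1$ into $\mathcal{C}_1$. Finiteness of $\Gamma\setminus\Gamma_0$ (since $V$ is finite-dimensional) then provides inverses, so $\Gamma_1:=\mathcal{C}_1$ is a subgroup with $\mathcal{C}_k=\gamma_k\Gamma_1$. A generator of $W_{k,k',\alpha}$ normalized at $\gamma=\gamma_k$ produces the scalars $\chi(\alpha,\gamma)$ — with $\chi(\alpha,\gamma_k)=1$ by normalization and $\chi(e,\gamma)=1$ forced by $\iota_{\gamma_k}=\mathrm{id}$ together with the diagonal form of $A_e$ — and the single matrix $a_k\in\Hom(V_{\gamma_k},V_{\alpha\gamma_k})$ parametrizing $f_{k'}A_\alpha f_k$, yielding formula \eqref{eq:A-Structure}. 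The cocycle relation \eqref{eq:ZCondition} then drops out by computing $(\phi_1\phi_2)|_{V_\gamma}$ for $\phi_1\in A_\alpha$, $\phi_2\in A_\beta$, $\gamma\in\Gamma_k$ in two ways — as a composition of restrictions (contributing a factor of $\chi(\alpha,\beta\gamma)\chi(\beta,\gamma)$) and via the parameter of $\phi_1\phi_2\in A_{\alpha\beta}$ (contributing $\chi(\alpha\beta,\gamma)$) — and matching scalars.
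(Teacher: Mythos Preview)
Your overall architecture mirrors the paper's: analyze $A_e$ by its action on each $V_\gamma$, identify $A_e\cong\prod_k\End V_{\gamma_k}$, pick intertwiners $\iota_\gamma$, then study each $A_\alpha$ as an $A_e$-bimodule to extract the scalars $\chi(\alpha,\gamma)$ and finally verify \eqref{eq:ZCondition} by multiplying two homogeneous elements. Your Schur-type description $f_{k'}A_\alpha f_k\cong W_{k,k',\alpha}\otimes\Hom(V_{\gamma_k},V_{\gamma_{k'}})$ is exactly equivalent to the paper's assertion that $\sigma_{\alpha,\gamma}:a_{\gamma_k}\mapsto a_\gamma$ is a well-defined linear bijection, and the cocycle computation at the end is the same on both sides.

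The one point where your plan is shaky is the mechanism you propose for the ``central claim''. You say a violation of uniqueness of $k'$ or of $\dim W_{k,k',\alpha}=1$ will produce a proper graded $A$-invariant subspace of $V$. I do not see how: such a violation gives you a single element $a\in f_{k'}A_\alpha f_k$ with $a|_{V_{\gamma_0}}=0$ and $a|_{V_{\gamma_1}}\neq 0$ for two indices $\gamma_0,\gamma_1$ in the same class $\mathcal C_k$, but other elements of $A_\alpha$ still act nontrivially on $V_{\gamma_0}$, so no invariant subspace is apparent. The paper's device is different and cleaner: multiply on the left by a suitable $b\in A_{\alpha^{-1}}$ (and, for the coset lemma, sandwich with the central idempotent $e_k$) so as to land back in $A_e$; the resulting element vanishes on $V_{\gamma_0}$ but not on $V_{\gamma_1}$, contradicting the diagonal description of $A_e$ you have already established. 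That is the contradiction you want, and it runs through the structure of $A_e$, not through an invariant subspace of $V$. The same argument simultaneously rules out the possibility that $\alpha\gamma\in\Gamma_0$ for some $\gamma\in\mathcal C_k$ while $\alpha\gamma'\notin\Gamma_0$ for another $\gamma'\in\mathcal C_k$; your finiteness remark (``provides inverses'') does not exclude this on its own, so without the $A_e$-argument the step from uniqueness of $k'$ to $\mathcal C_k=\gamma_k\Gamma_1$ is not yet complete.
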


The isomorphisms $\iota_{\alpha\gamma }$ and
$\iota_{\alpha\gamma_k}$ in the right-hand side of \eqref{eq:A-Structure}
are defined only when $\alpha\Gamma_k \not\subseteq \Gamma_0$.
Otherwise, $\chi(\alpha,\gamma )=0$, so we just do not need to
take into consideration those summands corresponding to such index~$k$.

\begin{proof}
Proposition \ref{prop:IrreducibleSemisimple} applied to $G^0=\{e\}$
implies the algebra $A_e$ is semisimple. By the Wedderburn
Theorem,
$A_e=I_1\oplus\dots \oplus I_p$, where
$I_k\simeq M_{n_k}(\Bbbk )$.
For every $a\in A_e$ we have
\begin{equation}\label{eq:E-decomp}
a \left(\sum\limits_{\beta\in \Gamma} T_\beta\right)
=
\sum\limits_{\gamma\in \Gamma\setminus\Gamma_0}\pi_\gamma(a),
\end{equation}
where the map $\pi_\gamma: A_e\to \End V_\gamma $
is defined by $\pi_\gamma(a) = aT_\gamma $.
This is a homomorphism of algebras.

For each $k=1,\dots, p$ set
$\Gamma_k=\{\gamma\in \Gamma \mid
 \pi_\gamma (I_k) \ne 0\}\ne \emptyset $.
Obviously, $\Gamma \setminus \Gamma_0
=\Gamma_1\cup \dots \cup\Gamma_p$,
$\Gamma_i\cap \Gamma_j =\emptyset$ for
$i\ne j$.

Let us fix a system of representatives $\gamma_k\in \Gamma_k$
for $k=1,\dots, p$, and assume $e\in \Gamma_1$,
$\gamma_1=e$.
If $\gamma\in \Gamma_k$
then $(\dim V_\gamma)^2=\dim I_k$, so
$n_\gamma = n_k$.
Moreover, $\pi_\gamma (I_k) = \End V_\gamma $.
Hence,
$\pi_\gamma \pi^{-1}_{\gamma_k} :\End V_{\gamma_k} \to \End V_\gamma$.
is also an isomorphism,
so it can be presented by conjugation:
There exist linear isomorphisms
$\iota_\gamma: V_{\gamma_k}\to V_\gamma $,
$\gamma\in \Gamma_k$, $k=1,\dots, p$,
such that $\pi_\gamma\pi_{\gamma_k}^{-1}(a)
= \iota_\gamma a \iota_\gamma^{-1}$
for $a\in \End V_{\gamma_k}$.

Finally, \eqref{eq:E-decomp} implies
\begin{equation}\label{eq:Ae-Structure}
A_e = \left\{
\sum\limits_{k=1}^p\sum\limits_{\gamma \in \Gamma_k}
\iota_\gamma a_k \iota_\gamma^{-1} T_\gamma
\mid
a_k\in \End V_{\gamma_k}
\right\}.
\end{equation}

\begin{lem}
The subsets $\Gamma_k\subseteq \Gamma $,
$k=1,\dots, p$, are left adjacent classes
in $\Gamma $ with respect to finite subgroup
$\Gamma_1$.
\end{lem}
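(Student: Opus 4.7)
The plan is to associate to each $\alpha \in \Gamma$ a partial self-map $\sigma_\alpha$ of $\{1,\ldots,p\}$ recording how left multiplication by $\alpha$ shuffles the partition $\{\Gamma_k\}$ of $\Gamma\setminus\Gamma_0$, and then to recover $\Gamma_1$ as the stabiliser of the index $1$ under this action.

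The crux---and, I expect, the only technically demanding step---is the following rigidity statement: for every $\alpha \in \Gamma$ and every $k$, either $\alpha\Gamma_k$ is entirely contained in a single $\Gamma_q$, or $\alpha\Gamma_k \subseteq \Gamma_0$. To prove it I would consider, for each $k$, the subspace $J_{\alpha,k} := A_{\alpha^{-1}} e_k A_\alpha \subseteq A_e$, where $e_k$ is the unit of the Wedderburn block $I_k$. The inclusions $A_e A_\beta \subseteq A_\beta$ and $A_\beta A_e \subseteq A_\beta$ show that $J_{\alpha,k}$ is a two-sided ideal of $A_e$, hence a direct sum $\bigoplus_{j \in K_k} I_j$ of blocks. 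On the other hand, graded irreducibility of $V$ (together with $\mathrm{id}_V = \sum_k e_k \in A_e$, already present in the Wedderburn structure) gives $Aw = V$ for every nonzero homogeneous $w$, and decomposing by graded components yields $A_\beta V_\gamma = V_{\beta\gamma}$ for every $\beta \in \Gamma$ and every $\gamma \in \Gamma \setminus \Gamma_0$. A direct computation using this surjectivity shows $J_{\alpha,k} V_\gamma = V_\gamma$ when $\alpha\gamma \in \Gamma_k$ and $0$ otherwise. Comparing with the Wedderburn description---according to which $J_{\alpha,k}$ acts nontrivially on $V_\gamma$ exactly when $j := k(\gamma) \in K_k$, a condition depending only on $j$---forces the truth value of ``$\alpha\gamma \in \Gamma_k$'' to be constant as $\gamma$ ranges over $\Gamma_j$; combined with the disjointness of the $\Gamma_k$, this is the rigidity.

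Given this, define $\sigma_\alpha(k) = q$ when $\alpha\Gamma_k \subseteq \Gamma_q$, and leave $\sigma_\alpha(k)$ undefined when $\alpha\Gamma_k \subseteq \Gamma_0$. The composition law $\sigma_{\alpha\beta} = \sigma_\alpha \circ \sigma_\beta$ is immediate, and $\sigma_{\alpha^{-1}}$ is a partial inverse of $\sigma_\alpha$, so $\Gamma_1' := \{\alpha \in \Gamma : \sigma_\alpha(1) = 1\}$ is a subgroup of $\Gamma$. One checks $\Gamma_1' = \Gamma_1$: if $\alpha \in \Gamma_1$ then $\alpha = \alpha \cdot e \in \alpha\Gamma_1 \cap \Gamma_1$, and the rigidity forces $\alpha\Gamma_1 \subseteq \Gamma_1$, hence $\sigma_\alpha(1) = 1$; conversely $\sigma_\alpha(1) = 1$ gives $\alpha = \alpha \cdot e \in \Gamma_1$. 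The same device applied to $\gamma_k$ and $\gamma_k^{-1}$ yields $\gamma_k\Gamma_1 \subseteq \Gamma_k$ and $\gamma_k^{-1}\Gamma_k \subseteq \Gamma_1$, so $\Gamma_k = \gamma_k \Gamma_1$. Finally, $\Gamma_1 \subseteq \Gamma \setminus \Gamma_0$ is finite because $V$ is finite-dimensional with $V_\gamma \neq 0$ for every $\gamma \in \Gamma \setminus \Gamma_0$.
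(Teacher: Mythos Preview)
Your proof is correct and takes essentially the same approach as the paper: both establish the rigidity statement that $\alpha\Gamma_j$ either lies in a single $\Gamma_k$ or in $\Gamma_0$ by sandwiching the Wedderburn idempotent $e_k$ between $A_{\alpha^{-1}}$ and $A_\alpha$ and reading off how the resulting element(s) of $A_e$ act on the various $V_\gamma$. The only cosmetic difference is that you work with the entire two-sided ideal $J_{\alpha,k}=A_{\alpha^{-1}}e_kA_\alpha$ and the partial maps $\sigma_\alpha$, whereas the paper picks a single element $c=ae_kb$ and argues by contradiction; the deduction of the subgroup and coset structure from rigidity is then identical.
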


\begin{proof}
It is enough to show the following property:
$\gamma \Gamma_k\cap \Gamma _m\ne \emptyset $
implies $\Gamma_m \subseteq \gamma\Gamma_k$
for every $\gamma \in \Gamma $ and for every
$k,m\in \{1,\dots, p\}$.
Indeed, the same property for $\gamma^{-1}$
would show $\gamma \Gamma_k$ is equal to $\Gamma_m$
provided they have nonempty intersection.
Hence, the finite set $\Gamma_1$ is a subgroup,
and all $\Gamma_k$, $k=1,\dots , p$, are left adjacent
classes in $\Gamma $.

Suppose $\gamma _0\in \gamma\Gamma_k\cap \Gamma_m$,
$\gamma_0 = \gamma\beta_0$, $\beta_0\in \Gamma_k$.
We may choose $0\ne u\in V_{\beta_0}$, $0\ne v \in V_{\gamma_0}$.
Since $A$ is graded irreducible, there exists $a\in A_\gamma $,
$b\in A_{\gamma^{-1}}$
such that $au=v$, $bv=u$.

If we choose $a_j=0$ for $j\ne k$ and $a_k=\mathrm{id}_{V_{\gamma_k}}$
in \eqref{eq:Ae-Structure} then
\[
e_k = \sum\limits_{\gamma\in \Gamma_k} T_\gamma \in A_e.
\]
Consider
$c=ae_kb \in A_e$. Since $cv=ae_k(bv)=ae_ku=v$, $c\ne 0$.
Assume $\Gamma_m\not\subseteq \gamma\Gamma_k$ and choose any
$\beta \in \Gamma_m\setminus \gamma\Gamma_k$.
Then for every $w\in V_\beta$ we have
$cw = ae_k(bw)=0$ because $bw\in V_{\gamma^{-1}\beta}$,
$\gamma^{-1}\beta\notin \Gamma_k$, $e_k(bw)=0$.
The element $c\in A_e$ constructed annihilates all $V_\beta $
for $\beta \in \Gamma_m\setminus \gamma\Gamma_k$, but
$cv=v\ne 0$ for $v\in V_{\gamma_0}$, $\gamma_0\in \Gamma_m$.
This is a contradiction to \eqref{eq:Ae-Structure}.
\end{proof}

Consider an arbitrary  $a\in A_\alpha $, $\alpha \in \Gamma $.
Then
\[
a = \sum\limits_{\gamma\in \Gamma } a_\gamma ,
\quad a_\gamma = aT_{\gamma }\in \Hom(V_\gamma, V_{\alpha\gamma}).
\]
Here $a_\gamma \in \Hom (V_\gamma, V_{\alpha\gamma })$.
Denote $a_{\gamma_k}$ by $a_k$, $k=1,\dots, p$.
Note that if $\gamma \in \Gamma_k$, $\alpha\gamma\in \Gamma_m$
then
$e_m a e_k \in A_\alpha $, so $\sum\limits_{\gamma \in \Gamma_k} a_\gamma
 \in A_\alpha $ for every $k=1,\dots, p$.

If $\alpha\gamma_k \notin \Gamma_0$ then
$W_{\alpha,\gamma }=\{b\in \Hom(V_{\gamma}, V_{\alpha\gamma})\mid
\exists a\in A_\alpha : a_\gamma =b \}\ne 0$
for every $\gamma \in \Gamma_k$.
Since $W_{\alpha,\gamma }$ is a
$(\End V_{\alpha\gamma})$--$(\End V_\gamma)$-bimodule embedded into
$\Hom (V_{\gamma}, V_{\alpha\gamma})$, we have
$W_{\alpha,\gamma } = \Hom (V_{\gamma}, V_{\alpha\gamma})$.

Given $\gamma\in \Gamma_k$, $\alpha \in \Gamma $, $\alpha\Gamma_k=\Gamma_m$,
the map $\sigma_{\alpha, \gamma}: a_k\mapsto a_\gamma $, $a\in A_\alpha$,
is a linear bijection between
$\Hom (V_{\gamma_k}, V_{\alpha\gamma_k})$ and
$\Hom (V_{\gamma}, V_{\alpha\gamma})$.
Assume $\sigma_{\alpha,\gamma_k}=\mathrm{id}$ and
$\sigma_{e,\gamma}(x) = \iota_\gamma x \iota_\gamma^{-1}$,
since we have already established the structure of $A_e$.

Suppose
$a = \sum\limits_{\gamma\in \Gamma_k} a_k^{\sigma_{\alpha,\gamma}}\in A_\alpha$,
$b = \sum\limits_{\beta\in \Gamma_m} \iota_\beta b_m \iota_{\beta}^{-1}T_\beta
 \in A_e$,
and consider
\[
ba = \sum\limits_{\gamma\in \Gamma_k} \iota_{\alpha\gamma}b_m
 \iota_{\alpha\gamma} a_{k}^{\sigma_{\alpha,\gamma}}.
\]
Here $(ba)_k = \iota_{\alpha\gamma_k}b_m\iota_{\alpha\gamma_k}^{-1}a_k$,
$(ba)_\gamma =
\iota_{\alpha\gamma}b_m\iota_{\alpha\gamma}^{-1}a_k^{\sigma_{\alpha,\gamma}}$.
Similarly, if
\[
c=
\sum\limits_{\beta\in \Gamma_k}
\iota_\beta c_k \iota_{\beta}^{-1}T_\beta\in A_e
\]
then
$(ac)_k = a_kc_k$, $(ac)_\gamma = a_k^{\sigma_{\alpha, \gamma}}\iota_\gamma c_k
 \iota_\gamma^{-1}$.

Summarizing, if $a\in \Hom (V_{\gamma_k}, V_{\alpha\gamma_k})$ then
\begin{equation}\label{eq:ModLinearity}
(bac)^{\sigma_{\alpha,\gamma}}
= \iota_{\alpha\gamma}
\iota_{\alpha\gamma_k}^{-1}
 b \iota_{\alpha\gamma_k} \iota_{\alpha\gamma}^{-1}
  a^{\sigma_{\alpha,\gamma }} \iota_\gamma c\iota_\gamma^{-1}
\end{equation}
for all
$b\in \End V_{\alpha\gamma_k}$, $c\in \End V_{\gamma_k}$.
Define the linear transformation
$\sigma'_{\alpha,\gamma}$ of $\Hom(V_{\gamma_k},V_{\alpha\gamma_k})$
by the following rule:
\[
\sigma'_{\alpha,\gamma}: x\mapsto
\iota_{\alpha\gamma_k}\iota_{\alpha\gamma}^{-1}
    x^{\sigma_{\alpha,\gamma}} \iota_\gamma ,
\quad
x\in \Hom(V_{\gamma_k},V_{\alpha\gamma_k}).
\]
Then
$(bac)^{\sigma'_{\alpha,\gamma}} = b a^{\sigma'_{\alpha,\gamma}} c$
for all
$b\in \End V_{\alpha\gamma_k}$, $c\in \End V_{\gamma_k}$.
Hence, $x^{\sigma'_{\alpha, \gamma}}= \chi(\alpha, \gamma)x$,
where $\chi: \Gamma \times \Gamma \to \Bbbk $,
and we may assume $\chi(\alpha, \gamma)=0$ for
$\gamma \in \Gamma_0$ or $\alpha\gamma \in \Gamma_0$.
Also, $\chi(\alpha, \gamma_k)=1$, $\chi(e,\gamma)=1$ for
$\gamma \notin \Gamma_0$.
Finally,
$x^{\sigma_{\alpha,\gamma}}=
\chi(\alpha, \gamma)\iota_{\alpha\gamma}\iota_{\alpha\gamma_k}^{-1}
  x \iota_\gamma^{-1} $,
which implies \eqref{eq:A-Structure}.

It remains to derive \eqref{eq:ZCondition}.
Consider two elements $a\in A_\alpha $, $b\in A_\beta $, where
\[
a=\sum\limits_{\gamma\in \Gamma_k}
\chi(\alpha, \gamma) \iota_{\alpha\gamma}\iota_{\alpha\gamma_k}^{-1} a_k \iota_\gamma^{-1} T_\gamma,
\quad
b=\sum\limits_{\delta\in \Gamma_m}
\chi(\beta, \delta) \iota_{\beta\delta}\iota_{\beta\gamma_m}^{-1} b_m \iota_\delta^{-1} T_\delta.
\]
If $c=ab\ne 0$ then $\beta\Gamma_m=\Gamma_k$, $\alpha\Gamma_k\not\subseteq \Gamma_0$.
Straightforward computation shows
\[
c= \sum\limits_{\delta\in \Gamma_m} \chi(\alpha, \beta\delta)\chi(\beta, \delta)
 \iota_{\alpha\beta\delta}\iota^{-1}_{\alpha\beta\gamma_m}
 \big(\iota_{\alpha\beta\gamma_m}\iota_{\alpha\gamma_k}^{-1}a_k\iota_{\beta\gamma_m}^{-1} b_m \big) \iota_\delta^{-1} T_\delta.
\]
On the other hand,
\[
c=\sum\limits_{\delta\in \Gamma_m}
\chi(\alpha\beta, \delta) \iota_{\alpha\beta\delta}\iota_{\alpha\beta\gamma_m}^{-1} c_m \iota_\delta^{-1} T_\delta,
\]
so for $\delta=\gamma_m$ we have
\[
c_m =
\chi(\alpha, \beta\gamma_m)
\big(\iota_{\alpha\beta\gamma_m}\iota_{\alpha\gamma_k}^{-1}a_k\iota_{\beta\gamma_m}^{-1} b_m \big).
\]
Therefore,
$\chi(\alpha\beta, \delta)\chi(\alpha, \beta\gamma_m)
= \chi(\alpha, \beta\delta)\chi(\beta,\delta)$.
\end{proof}

\begin{cor}\label{cor:Irred->Simple}
A graded irreducible subalgebra of $\End V$ is simple.
\end{cor}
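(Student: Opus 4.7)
The plan is to prove that any nonzero graded two-sided ideal $J$ of $A$ coincides with $A$; simplicity then follows, since Theorem~\ref{thm:A_Structure} forces $1_V=\sum_k e_k\in A_e\subseteq A$, so $A\cdot A\ni 1_V\neq 0$. The argument produces successively stronger elements of $J$: first a nonzero element of $A_e\cap J$, then a block identity $e_{k_0}$, and finally every simple summand $I_k$ of $A_e$.

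Pick a homogeneous $0\neq j\in J_\alpha$ and $v\in V_\gamma$ with $jv\neq 0$; then $jv\in V_{\alpha\gamma}$ is nonzero, so graded irreducibility applied to $jv$ gives $A_{\alpha^{-1}}(jv)=V_\gamma\neq 0$, whence some $y\in A_{\alpha^{-1}}$ satisfies $y(jv)\neq 0$, producing $0\neq yj\in A_e\cap J$. Next, Proposition~\ref{prop:IrreducibleSemisimple} (with $G^0=\{e\}$) combined with the Wedderburn decomposition appearing in the proof of Theorem~\ref{thm:A_Structure} yields $A_e=I_1\oplus\cdots\oplus I_p$ with $I_k\simeq M_{n_k}(\Bbbk)$ simple, so the nonzero $A_e$-ideal $A_e\cap J$ must contain some summand $I_{k_0}$ in full; in particular the block identity $e_{k_0}=\sum_{\gamma\in\Gamma_{k_0}}T_\gamma$ lies in $J$.

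Now fix an arbitrary $k\in\{1,\dots,p\}$. Since $\Gamma_k$ and $\Gamma_{k_0}$ are left cosets of the fine subgroup $\Gamma_1$, the element $\alpha=\gamma_k\gamma_{k_0}^{-1}$ satisfies $\alpha\Gamma_{k_0}=\Gamma_k$ and $\alpha\gamma_{k_0}=\gamma_k$. By Theorem~\ref{thm:A_Structure} and the normalization $\iota_{\gamma_k}=\mathrm{id}$, the subspace $A_\alpha e_{k_0}$ realizes every $a_{k_0}\in\Hom(V_{\gamma_{k_0}},V_{\gamma_k})$, and $A_{\alpha^{-1}}e_k$ realizes every $b_k\in\Hom(V_{\gamma_k},V_{\gamma_{k_0}})$. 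Choose them so that $a_{k_0}b_k\in\End V_{\gamma_k}$ is a nonzero rank-one map (possible since $V_{\gamma_k},V_{\gamma_{k_0}}\neq 0$). The cocycle identity~\eqref{eq:ZCondition}, specialized as $\chi(\alpha,\alpha^{-1}\delta)\chi(\alpha^{-1},\delta)=\chi(\alpha,\gamma_{k_0})\chi(e,\delta)=1$ for $\delta\in\Gamma_k$, then shows $ae_{k_0}b=ab=\sum_{\delta\in\Gamma_k}\iota_\delta(a_{k_0}b_k)\iota_\delta^{-1}T_\delta\in I_k$ is nonzero. Simplicity of the matrix algebra $I_k$ forces $I_k\subseteq J$. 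Summing over $k$ gives $1_V\in J$, hence $J=A$. The main subtlety is the composition in the third step: the $\Hom$-spaces may be rectangular when $n_k\neq n_{k_0}$, so one cannot in general arrange for invertible inter-block maps, but a rank-one composition always exists and suffices; the cocycle identity is what ensures the product lands genuinely inside $I_k$ rather than some twisted variant.
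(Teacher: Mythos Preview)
Your proof is correct and complete; the cocycle specialization you identify is exactly right (set $\gamma=\alpha$, $\alpha=\alpha^{-1}$, $\beta=\delta\in\Gamma_k$ in \eqref{eq:ZCondition}). One small remark: the equality $ae_{k_0}b=ab$ holds only because you implicitly take $a$ and $b$ to have all parameters other than $a_{k_0}$ and $b_k$ equal to zero; this is fine, but worth stating, and in any case $ae_{k_0}b\in J$ is all you need.

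Your route differs genuinely from the paper's. The paper does not argue ideal-theoretically at all: it builds an explicit graded $\Bbbk$-linear isomorphism $\Phi: M_n(\Bbbk^\chi\Gamma_1)\to A$ via a basis $X_{(m,i),(k,j)}(\gamma)$ of $A$ indexed by matrix-unit positions and elements of the fine subgroup~$\Gamma_1$, checks by direct computation that $\Phi$ respects multiplication, and then cites the known fact \cite{BZS} that matrix algebras over twisted group algebras are graded simple. That approach buys more than simplicity---it gives an intrinsic description of $A$ up to isomorphism, which the paper immediately uses in the next corollary to classify finite-dimensional simple graded algebras. Your argument, by contrast, is self-contained (no external reference needed for simplicity) and conceptually cleaner for the bare statement: it shows directly that a nonzero graded ideal swallows each simple block $I_k$ of $A_e$ and hence contains~$1_V$. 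If you only want Corollary~\ref{cor:Irred->Simple} itself, your proof is arguably preferable; if you also want the structural identification $A\simeq M_n(\Bbbk^\chi\Gamma_1)$, you would still have to build~$\Phi$.
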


\begin{proof}
Suppose $A$ is a graded irreducible subalgebra of $\End V$ as described by
Theorem \ref{thm:A_Structure}, $n=n_1+\dots + n_p$.
The basis of this algebra over $\Bbbk $ consists of
\[
X_{(m,i),(k,j)}(\gamma ) =
 \sum\limits_{\beta\in \Gamma_k} \chi(g, \beta) \iota_{g\beta}\iota_{g\gamma_k}^{-1}
 E_{ij} \iota_\beta^{-1} T_\beta,
\quad
g =\gamma_m\gamma \gamma_k^{-1},
\]
where
$k,m=1,\dots, p$, $i=1,\dots, n_m$, $j=1,\dots, n_k$, $\gamma \in \Gamma_1$,
and $E_{ij}: V_{\gamma_k} \to V_{\gamma_m}$ are linear transformations corresponding
to matrix units in some fixed homogeneous basis of~$V$.

Consider the space of matrices $M_n(\Bbbk \Gamma_1)$ over the group algebra
$\Bbbk \Gamma_1$. Let us enumerate rows and columns of these matrices by
pairs $(k,j)$, $k=1,\dots, p$, $j=1,\dots, n_k$, instead of natural numbers
$1,\dots, n$.
Given a 2-cocycle $\chi \in Z^2(\Gamma_1,\Bbbk^*)$, denote by
$\Bbbk^\chi \Gamma_1$ the same space $\Bbbk\Gamma_1$ endowed with 
twisted multiplication $\gamma \cdot \beta = \chi(\gamma, \beta) \gamma\beta$,
$\gamma,\beta \in \Gamma_1$.
This graded algebra as well as matrix algebras $M_n(\Bbbk^\chi\Gamma_1)$
with natural grading are known to be simple, see, e.g.,
\cite{BZS}.

Let $\Phi $ stands for the linear bijection from
$M_n(\Bbbk\Gamma_1)$ to $A$ defined by
\begin{equation}\label{eq:GrIsomorphism}
\Phi : E_{(m,i),(k,j)}\gamma \mapsto \chi(\gamma_m, \gamma) X_{(m,i),(k,j)}(\gamma ),
\quad \gamma \in \Gamma_1.
\end{equation}
Straightforward computation shows that $\Phi $ is an isomorphism of
graded algebras
$M_n(\Bbbk^\chi\Gamma_1)$ and $A$. 
\end{proof}

In particular, we obtain one more proof of the structure
theorem for finite-dimensional graded algebras over an
algebraically closed field: Proposition~\ref{prop:CendEmbedding} and Corollary
\ref{cor:Irred->Simple} immediately imply

\begin{cor}[\cite{BZS,ON1982}]
A finite-dimensional simple graded associative algebra $A$
over an algebraically closed field
is isomorphic to $M_n(\Bbbk^\chi \Gamma_1)$,
where $\chi \in Z^2(\Gamma_1, \Bbbk^*)$.
\end{cor}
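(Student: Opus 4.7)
The plan is to reduce to the case $G^0=\{e\}$ and then invoke the machinery already developed in the paper. Specifically, when $G^0=\{e\}$ the data $\sigma:\Gamma\to\Aut G^0$ and $\varphi\in Z^2(\Gamma,G^0,\sigma)$ are forced to be trivial, $H^0=\Bbbk$, and Definition \ref{defn:GradedConformalG0} degenerates to that of an ordinary $\Gamma$-graded $\Bbbk$-algebra. So a finite-dimensional simple graded associative algebra $A$ is the same thing as a finite simple $(\Gamma,\sigma,\varphi)$-graded conformal algebra over the trivial group.

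First I would apply Proposition \ref{prop:CendEmbedding} to $A$. Over $G^0=\{e\}$, ``finite'' means finite-dimensional and the torsion-free hypothesis is automatic because $H^0=\Bbbk$ is a field, so the proposition produces a finite-dimensional $\Gamma$-graded vector space $V$ carrying a faithful irreducible graded action of $A$. This gives a graded embedding $A\hookrightarrow\End V$ whose image is a graded irreducible subalgebra. Then I would feed this embedding into Theorem \ref{thm:A_Structure} to extract a fine subgroup $\Gamma_1\le\Gamma$, coset representatives $\gamma_1=e,\gamma_2,\dots,\gamma_p$, a function $\chi\in Z(\Gamma,\Gamma_0,\Gamma_1)$, and linear isomorphisms $\iota_\gamma:V_{\gamma_k}\to V_\gamma$ realising each homogeneous component $A_\alpha$ via \eqref{eq:A-Structure}. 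Restricting $\chi$ to $\Gamma_1\times\Gamma_1$ yields the required 2-cocycle in $Z^2(\Gamma_1,\Bbbk^*)$, and I set $n=\dim V_{\gamma_1}+\dots+\dim V_{\gamma_p}$.

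Finally, the explicit map $\Phi$ constructed in \eqref{eq:GrIsomorphism} inside the proof of Corollary \ref{cor:Irred->Simple} is already verified there to be an isomorphism of graded algebras $M_n(\Bbbk^\chi\Gamma_1)\simeq A$, which is exactly the claim. The ``hard part'' is really nothing new at this stage: every substantive ingredient---the existence of the faithful irreducible module, the normal form \eqref{eq:A-Structure}, and the matrix-algebra identification via $\Phi$---has been built up in the preceding results. The only item meriting a careful check is that each step of the general $G^0$-theory (in particular, the torsion argument inside Lemma \ref{lem:TorsionAnnihilates} and Proposition \ref{prop:CendEmbedding}) specialises correctly when $G^0=\{e\}$, which it does trivially since $H^0=\Bbbk$ is a field with no zero divisors and the ``connectedness'' of $G^0$ is vacuous for a single point.
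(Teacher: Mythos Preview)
Your proposal is correct and follows essentially the same route as the paper: the paper's proof is the single sentence ``Proposition~\ref{prop:CendEmbedding} and Corollary~\ref{cor:Irred->Simple} immediately imply'', and you have simply unpacked that sentence by spelling out the specialization to $G^0=\{e\}$ and pointing explicitly to Theorem~\ref{thm:A_Structure} and the isomorphism $\Phi$ from \eqref{eq:GrIsomorphism}. Your additional remark that the torsion and connectedness hypotheses become vacuous when $H^0=\Bbbk$ is a helpful sanity check but not strictly needed.
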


\section{Graded conformal algebras over
the affine line}

In this section, we consider graded conformal algebras over $G^0=\mathbb A^1$,
where $\mathbb A^1$ is the additive group of the base
field $\Bbbk $ which is assumed to be
algebraically closed and $\mathrm{char}\,\Bbbk =0$.
In this case,
$H^0=\Bbbk [T]$,
$\Aut G^0=\Bbbk ^*$,  and a homomorphism
$\sigma:\Gamma \to \Aut G^0$ maps $\gamma \in \Gamma $ to
a nonzero scalar $\sigma(\gamma )\in \Bbbk^* $, it acts on
$\mathbb A^1$ by multiplication.

\begin{rem}\label{rem:TrivialCocycle}
If $\Gamma $ is a finite group then
for every $\sigma:\Gamma \to \Bbbk^*$ all cocycles
in $Z^2(\Gamma , \Bbbk , \sigma)$ are cohomological to
the zero cocycle (see \cite[III.10.2]{Brown}).
Therefore, to classify
finite simple associative conformal algebras over a linear algebraic
group $G$ such that $G^0\simeq \mathbb A^1$ it is enough to consider only
$(\Gamma, \sigma, 0)$-graded conformal algebras.

However, if $\Gamma $ is infinite then there may exist nontrivial cocycles.
This case does not correspond to a linear algebraic group, but
we still can consider graded conformal algebras in these settings.
\end{rem}

By Proposition \ref{prop:CendEmbedding},
to classify simple finite $(\Gamma , \sigma,\varphi)$-graded associative
conformal algebras it is enough to describe
simple finite graded irreducible subalgebras of the
$(\Gamma ,\sigma,\varphi)$-graded conformal algebra $\Cend^\Gamma M$,
where $M=\bigoplus\limits_{\gamma\in \Gamma  }M_\gamma $
is a finitely generated torsion-free  $H^0$-module.
Such a module over $\Bbbk[T]$ is free.

Let us state in details the structure of $\Cend^\Gamma M$ in the graded
case. Suppose $N$ is the rank of $M$ over $H^0$, and let
  $e_1,\dots , e_N$ be a homogeneous basis of $M$.
For each $i=1,\dots, N$ there exists $\alpha_i\in \Gamma $
such that $e_i\in M_{\alpha_i}$.
An arbitrary element $a\in \Cend^\Gamma M$ is completely defined
by the family of regular functions
$(a\oo{\lambda} e_i)=\sum\limits_{s\ge 0} \sum\limits_{j=1}^N
(- \lambda)^s a^s_{ij}(T)e_j$,
$i=1,\dots, N$,
where $a_{ij}^s(T)\in H^0$.
Hence, $a\in \Cend^\Gamma M$ can be identified with the matrix
$[a]=\big (a_{ij}(T,x)\big )\in M_N(\Bbbk[T,x])$,
where
\[
a_{ij}(T,x)=\sum\limits_{s\ge 0} (\sigma(\alpha_j\alpha_i^{-1})T
 +\varphi(\alpha_i\alpha_j^{-1}, \alpha_j\alpha_i^{-1}))^sa_{ij}^s(x).
\]
The $H^0$-module $\Cend^\Gamma M$ carries the natural $\Gamma $-grading given
by
\begin{equation}\label{eq:NaturalGrading}
(\Cend^\Gamma M)_\gamma = \sum\limits_{i,j: \gamma =\alpha_i\alpha_j^{-1}}
 \Bbbk(T,x)E_{ij},
\end{equation}
where $E_{ij}$ are the matrix units.

The operations $(\cdot\oo{\lambda }\cdot )$, $\lambda \in \mathbb A^1$,
on matrices over $\Bbbk[T,x]$
may be explicitly computed by \eqref{eq:CendOperations}:
If $a=f(T,x)E_{ij}$, $b=g(T,x)E_{jk}$ then
\begin{multline}\label{eq:CendOperationsMatrix}
(a\oo{\lambda } b)
=
f(-\sigma(\alpha)(\lambda+\varphi(\alpha,\alpha^{-1})), x)  \\
\times
g(T+\sigma(\alpha\beta)\lambda
+\varphi(\alpha^{-1},\alpha\beta),
x +\sigma(\alpha_i)\lambda
+\varphi(\alpha^{-1},\alpha_i))E_{ik},
\end{multline}
where $\alpha = \alpha_i\alpha_j^{-1}$,
$\beta = \alpha_j\alpha_k^{-1}$.

Let $V=\bigoplus\limits_{\gamma\in \Gamma } V_\gamma $
be an $N$-dimensional $\Gamma $-graded linear space,
$\dim V_\gamma = n_\gamma $,
 and let
$M=H^0\otimes V$ be the free module with the inherited  grading,
$M_\gamma =H^0\otimes V_\gamma $.

Given a homogeneous basis $e=(e_1,\dots, e_N)$ of $V$, consider
$\Cend^\Gamma_N
 =\{[a]_e\mid a\in \Cend^\Gamma M\}
\simeq
M_N(\Bbbk [T,x])$
as a $(\Gamma,\sigma,\varphi)$-graded
conformal algebra in the matrix form
(the grading is completely defined by $\alpha_1,\dots, \alpha_N\in \Gamma$,
as above).

Suppose
there is another homogeneous $H^0$-basis $f=(f_1,\dots, f_N)$ in $M$ such that
$(f_1,\dots, f_N)=(e_1,\dots, e_N)Q(T)$,
$Q$ is an invertible matrix in
$M_N(\Bbbk[T])$ which can be presented in a block-diagonal form
$Q(T)=\bigoplus\limits_{\gamma \in \Gamma\setminus \Gamma_0}Q_\gamma(T)$,
$Q_\gamma,Q_\gamma^{-1} \in M_{n_\gamma}(\Bbbk[T])$.
This is straightforward to compute that for a given
$a\in (\Cend^\Gamma M)$
its matrix in the new basis can be found as follows:
$[a]_f = Q^{-1}(x)[a]_e Q^\Gamma(T,x)$,
where
$Q^\Gamma(T,x) = \bigoplus\limits_{\gamma\in \Gamma\setminus\Gamma_0}
  Q_\gamma(x-\sigma(\gamma)T)$.

\begin{thm}\label{thm:FiniteIrreducible}
A finite graded irreducible conformal subalgebra
$C\subseteq \Cend^\Gamma _N$
is isomorphic to the current algebra $\Cur A$
over a graded irreducible subalgebra $A\subseteq \End V$.
\end{thm}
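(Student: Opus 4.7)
The plan is to exhibit a homogeneous change of basis $Q(T) \in GL_N(\Bbbk[T])$ of $M$ in which every element of $C$ is represented by a matrix with entries in $\Bbbk[T]$, i.e., independent of the second variable $x$. A direct substitution into \eqref{eq:CendOperationsMatrix} shows that the set of $x$-independent matrices in $\Cend^\Gamma_N$ is closed under both the $\lambda$-products and the $H^0$-action, and that this subalgebra is canonically isomorphic to $\Cur(\End V)$ under the correspondence sending $f(T) \otimes a$ to the matrix whose $(i,j)$-entry is $f(\sigma(\alpha_j\alpha_i^{-1})T + \varphi(\alpha_i\alpha_j^{-1},\alpha_j\alpha_i^{-1}))\cdot a_{ij}$. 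Once such a basis is in place, defining $A \subseteq \End V$ as the image of $C$ under the specialization $T \mapsto 0$ of the matrix entries, the identification $C \cong \Cur A$ is immediate.

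To produce $Q(T)$, I would pass to the fraction field: since $C$ acts graded-irreducibly on $M$ and $\Bbbk[T]$ is a domain, the extension $C_{\Bbbk(T)} = C \otimes_{\Bbbk[T]} \Bbbk(T)$ embeds graded-irreducibly into $M_N(\Bbbk(T))$. Theorem \ref{thm:A_Structure} applied over the field $\Bbbk(T)$ then provides a fine subgroup $\Gamma_1 \le \Gamma$, a 2-cocycle $\chi$, and isomorphisms $\iota_\gamma \in GL_{n_\gamma}(\Bbbk(T))$ organizing $C_{\Bbbk(T)}$ into the form \eqref{eq:A-Structure}. The key technical step is to show that the rational maps $\iota_\gamma$ can be arranged to lie in $GL_{n_\gamma}(\Bbbk[T])$: because $M$ is a free finitely generated $\Bbbk[T]$-module, $C$ is $\Bbbk[T]$-finite, and the matrix entries of elements of $C$ are polynomial (rather than merely rational) in $T,x$, denominators can be cleared component-wise in the $\Gamma$-grading, and the $\iota_\gamma$ then assemble into a block-diagonal $Q(T) \in GL_N(\Bbbk[T])$.

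Graded irreducibility of $A$ on $V$ is inherited from that of $C$ on $M$ by reduction modulo $T$, using the current identification $C = \Bbbk[T]\cdot A$ established in the first step. The main obstacle is the gauge-transformation step itself: promoting the rational structural data from Theorem \ref{thm:A_Structure} applied over $\Bbbk(T)$ to polynomial data over $\Bbbk[T]$ while preserving $\Gamma$-homogeneity. This comes down to a careful $\Bbbk[T]$-module analysis of $C$ together with the compatibility of \eqref{eq:CendOperationsMatrix} under conjugation by $Q(T)$, which in particular requires that each homogeneous block $Q_\gamma(T)$ interact correctly with the twisting factors $\sigma(\gamma)$ and $\varphi(\gamma^{-1},\gamma\beta)$ appearing throughout the $\lambda$-products.
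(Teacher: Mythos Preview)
Your proposal has a genuine gap at its central step. You plan to pass to $C_{\Bbbk(T)} = C\otimes_{\Bbbk[T]}\Bbbk(T)$ and invoke Theorem~\ref{thm:A_Structure} over the field $\Bbbk(T)$, but $C$ is a \emph{conformal} algebra, not an ordinary associative $\Bbbk[T]$-algebra: its product is the $\lambda$-family \eqref{eq:CendOperationsMatrix}, which is neither $H^0$-bilinear nor a single associative multiplication, so localizing the $H^0$-module structure does not produce an ordinary graded associative $\Bbbk(T)$-algebra acting on $\Bbbk(T)^N$ by matrix multiplication. Concretely, the matrix realization of $C$ sits in $M_N(\Bbbk[T,x])$, and the variable $x$ encodes the conformal (pseudo-tensor) structure; tensoring along $\Bbbk[T]\hookrightarrow\Bbbk(T)$ does nothing to eliminate $x$, so there is no embedding of $C_{\Bbbk(T)}$ into $M_N(\Bbbk(T))$ to which Theorem~\ref{thm:A_Structure} could apply. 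Even granting some ad hoc specialization, Theorem~\ref{thm:A_Structure} uses that the base field is algebraically closed (Wedderburn, innerness of automorphisms of matrix algebras), which $\Bbbk(T)$ is not.

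The paper's argument does not attempt to reduce the conformal problem to Theorem~\ref{thm:A_Structure} by base change. Instead it works inside the conformal category: one first restricts to the identity component $C_e$ and projects it via $\pi_\gamma$ to the non-graded conformal algebras $\Cend M_\gamma$; each $\pi_\gamma(C_e)$ is finite and irreducible, so by the non-graded classification of \cite{BKL2003} one can choose a homogeneous basis of $M$ in which $\pi_\gamma(C_e)=\Cur\End\Bbbk^{n_\gamma}$. Proposition~\ref{prop:IrreducibleSemisimple} and \cite{DK1998} then give the block decomposition of $C_e$, and a short lemma on automorphisms of current algebras forces the identity matrix $E_N$ into $C_e$. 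The presence of $E_N$ is the crucial device you are missing: for homogeneous $a\in C_\alpha$, computing $a\oo{\lambda}E_N$ shows the coefficients $a^s(x)$ in the $T$-expansion already lie in $C_\alpha$, and a further product computation against elements of $C_e$ then forces these coefficients to be independent of $x$. Only after this purely conformal reduction does one obtain $C=H^0\otimes A$ with $A\subseteq\End V$, and graded irreducibility of $A$ follows. The heavy lifting is thus the non-graded conformal structure theory of \cite{BKL2003} and \cite{DK1998}, not Theorem~\ref{thm:A_Structure}.
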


\begin{proof}
Denote by $\Gamma _0$ the set of all $\gamma \in \Gamma $
such that $V_\gamma =0$.
For each $\gamma \in \Gamma\setminus \Gamma_0 $
one may consider $M_\gamma $ as an ordinary $H^0$-module
and define the (non-graded) conformal algebra $\Cend M_\gamma $.
There exists a natural map
\begin{equation}\label{eq:Projection}
\pi_\gamma : C_e\to \Cend M_\gamma ,
\quad
 \pi_\gamma\bigg(\sum\limits_{i,j}a_{ij}(T,x)E_{ij} \bigg) =
 \sum\limits_{i,j:\alpha_i=\alpha_j=\gamma } a_{ij}(T, \sigma(\gamma)x)E_{ij}.
\end{equation}
It follows from \eqref{eq:CendOperationsMatrix} that
$\pi_\gamma $ is a homomorphism of conformal algebras.
Moreover,
if $\mu_\gamma: M_\gamma \to M_\gamma $
is the map defined by $h(T)\otimes e \mapsto h(\sigma(\gamma)T)\otimes e$
then
\[
\mu_{\gamma}^{-1} (\pi_\gamma(a))(\lambda ) \mu_\gamma = a(\lambda)|_{M_\gamma }
\]
for all $a\in C_e$. Therefore, irreducibility of $C$
implies $\pi_\gamma (C_e)$ is a finite irreducible
subalgebra of $\Cend M_\gamma $.
By \cite{BKL2003}, the latter coincides with
$\Cur\End \Bbbk^{n_\gamma } $
if we make an appropriate choice of an $H^0$-basis in $M_\gamma $.

The latter means (see, e.g., \cite{Kol2006Adv}) that
there exists an invertible matrix $Q_\gamma \in M_{n_\gamma}(\Bbbk[x])$
such that
\[
Q_\gamma ^{-1}(x) \pi_\gamma (C_e) Q_\gamma (x-T) = M_{n_\gamma }(\Bbbk[T]).
\]
Let us choose a new $H^0$-basis in $M$ with transition matrix
\[
Q(T) = \bigoplus\limits_{\gamma \in \Gamma\setminus \Gamma_0}
Q_\gamma(\sigma(\gamma)^{-1}T).
\]
In this basis,  matrices of all elements from $ C_e$ do not depend
on~$x$, i.e., $\pi_\gamma (C_e) = \Cur\End \Bbbk^{n_\gamma }$.

By Proposition \ref{prop:IrreducibleSemisimple},
$C_e$ is a semisimple finite conformal algebra.
As shown in \cite{DK1998} (see also \cite{Kol2006Adv}),
$C_e$ is isomorphic to the direct sum
of current conformal algebras over matrix algebras, i.e.,
\[
C_e=I_1\oplus \dots \oplus I_p, \quad I_k\simeq \Cur \End \Bbbk^{n_k}.
\]
Denote by $\pi_k$ the isomorphism $\Cur \End \Bbbk^{n_k} \to I_k$.

All conformal algebras $\Cur \End \Bbbk^{n_k }$ are simple, so we may
split $\Gamma \setminus \Gamma _0$ into adjacent classes
\[
\Gamma_k
=\{\gamma\in \Gamma \mid \pi_\gamma (I_k)= \Cur\End \Bbbk^{n_\gamma } \},
\quad
k=1,\dots, p.
\]
In particular, $n_\gamma =n_k$ for every $\gamma \in \Gamma _k$.

For every $k=1,\dots, p$ and for every $\gamma \in \Gamma_k$
we may consider the automorphism $\theta_\gamma = \pi_\gamma \pi_k :
\Cur\End \Bbbk^{n_k} \to \Cur\End \Bbbk^{n_k} $

\begin{lem}\label{lem:AutomorphismsOfCurrent}
Let $\theta  $
be an automorphism of the conformal algebra
$\Cur \End \Bbbk^{n}\simeq M_n(\Bbbk[T])$.
Then $\theta(a) = Q^{-1} a Q $, $a\in \Cur \End \Bbbk^n$,
for some invertible matrix $Q\in M_n(\Bbbk)$.
\end{lem}

In particular, $\theta(E_n)=E_n$.

\begin{proof}
Let us extend $\theta $ to a map $\bar\theta:\Cend_n \to \Cend _n$, where
$\Cend_{n}=\Cend (H\otimes \Bbbk^{n})\simeq M_n(\Bbbk[T,x])$
by the following rule:
\[
\bar\theta(x^ka )=x^k\theta (a), \quad a\in \Cur \End \Bbbk^n,\
k\ge 0.
\]
It is easy to see that the map $\bar\theta$ obtained is an automorphism
of the conformal algebra $\Cend_n$.

All automorphisms of $\Cend_{n}$ were described in \cite{BKL2003},
see also \cite{Kol2006Adv}. There exist a scalar $\alpha \in \Bbbk $
and an invertible matrix $Q(t)\in M_n(\Bbbk[t])$ such that
for a given $a=a(T,x)\in \Cend_n$ we have
$\bar\theta(a(T,x)) = Q^{-1}(x) a(T,x+\alpha ) Q(x-T)$.
Since $\bar\theta $ preserves $\Cur\End \Bbbk^n$
 whose elements do not depend of~$x$, the components of $Q$ do not depend
on $x$ as well.
\end{proof}

Therefore, for every $\gamma \in \Gamma_k$, $k=1,\dots, p$,
$\theta(E_{n_k}) = \pi_\gamma (\varepsilon_k) = E_{n_k}$,
where $\varepsilon_k = \pi_k(E_{n_k}) \in I_k$.
Since $\pi_\gamma$ is given by \eqref{eq:Projection},
$\varepsilon_k = E_{N_k}$, where $N_k=|\Gamma_k|n_k$.
Hence, $C_e$ contains the identity matrix
$E_N=\varepsilon_1+\dots +\varepsilon_p $.

It remains to describe
finite graded irreducible subalgebras
 $C\subseteq \Cend^\Gamma_N$
such that $\pi_\gamma(C_e) = \Cur\End \Bbbk^{n_\gamma }$
for every $\gamma \in \Gamma\setminus \Gamma_0$
and $E=E_N\in C_e$.

Suppose $a=a(T,x)$ is a nonzero element of $C_\alpha $ for some
$\alpha \in \Gamma $.
Then
\begin{equation}\label{eq:T-decomp}
a = \sum\limits_{s\ge 0}(\sigma(\alpha^{-1})T +\varphi(\alpha, \alpha^{-1}))
a^s(x),
\quad a^s(x) =
\sum\limits_{i,j: \gamma_i\gamma_j^{-1}=\alpha}a_{ij}^s(x) E_{ij},
\end{equation}
where $a_{ij}^s(x) \in \Bbbk[x]$.

Since
$a\oo{\lambda } E \in C_\alpha$ for every $\lambda \in \mathbb A^1$,
we may conclude $a^s(x) \in C_\alpha $ for all $s\ge 0$.
Therefore,
$C=H^0\otimes A$, where $A$ is a graded subspace of
$M_N(\Bbbk[x])$ with respect to the same natural grading.

Assume $a=a^0(x)$, and choose $i,j\in \{1,\dots, N\}$
such that $a_{ij}(x)=a_{ij}^0\ne 0$.
Denote $\gamma =\gamma_j$, then $\gamma_i=\alpha\gamma $.

Since both $M_\gamma, M_{\alpha\gamma}\ne 0$, there exists
$0\ne b\in C_{\alpha^{-1}}$.
We may assume $b=b(x)$, and choose $k,l\in \{1,\dots, N\}$
such that $\gamma_l=\gamma $, $\gamma_k=\alpha\gamma$, $b_{lk}(x)\ne 0$.

For every $k\in \{1,\dots, N\} $ such that
$\gamma_k=\alpha\gamma $
there exists $\varepsilon_{ki}\in C_e$ such that
$\pi_{\alpha\gamma}(\varepsilon_{ki})=E_{ki}
 \in \Cur M_{n_{\alpha\gamma}}(\Bbbk)$
(assuming the numeration of rows and columns
in this ``small'' matrix is the same as in the initial
$(N\times N)$-matrix $a$).

Then
$c=b \oo{0} (\varepsilon_{ki} \oo{0} a)\in C_e$,
but \eqref{eq:CendOperations} implies
$c(0) e_j = b(0)\varepsilon_{ki}(0)a(\lambda )e_j$.
The latter is equal to
$b_{lk}(T)a_{ij}(T+\varphi(\alpha, \gamma))e_l\in M_\gamma $,
i.e., if either of $a_{ij}$ or $b_{kl}$  depend on~$x$
then some nonzero coefficients of $c\in C_e$ also depend on~$x$.
This is impossible since $C_e$ is a direct sum of current algebras,
$\pi_\gamma (C_e)=\Cur\End \Bbbk^{n_\gamma}
=  M_{n_\gamma}(\Bbbk[T])$.

Therefore, if we choose a basis of $M$ over $H^0$ as described above
then all coefficients of all matrices in $A$ do not depend
on $x$. Hence, $A$ is a graded subalgebra of
$M_N(\Bbbk )\simeq \End V$. It is easy to see that
the irreducibility of $C$ implies $A$ to be graded irreducible
subalgebra of $\End V$.
\end{proof}

Corollary \ref{cor:Irred->Simple} implies

\begin{cor}\label{cor:IrredSimple}
A finite graded irreducible conformal subalgebra
$C\subseteq \Cend^\Gamma _N$ is simple.
\end{cor}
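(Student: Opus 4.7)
The strategy is to combine the structural identification of $C$ with the simplicity result for irreducible graded subalgebras of $\End V$. By Theorem \ref{thm:FiniteIrreducible}, there exist a finite-dimensional graded linear space $V$ and a graded irreducible subalgebra $A \subseteq \End V$ such that $C \simeq \Cur A$; by Corollary \ref{cor:Irred->Simple}, the algebra $A$ is simple as a graded associative algebra. The task thus reduces to showing: if $A$ is a finite-dimensional simple graded associative algebra, then $\Cur A$ is a simple $(\Gamma,\sigma,\varphi)$-graded conformal algebra over $G^0$.

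Nondegeneracy of the $\lambda$-product is immediate from simplicity of $A$: since $A^2 \ne 0$, pick homogeneous $a,b \in A$ with $ab \ne 0$; the current-algebra formula in the definition of $\Cur A$ yields $(1 \otimes a)\oo{\lambda}(1 \otimes b) = \chi(\lambda) \otimes ab$ for a nonzero regular function $\chi \in H^0$, so $(\Cur A)\oo{\lambda}(\Cur A) \ne 0$ for generic $\lambda \in G^0$.

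For the absence of nonzero proper graded ideals, let $I$ be a nonzero graded ideal of $\Cur A = H^0 \otimes A$. I would show that $I$ meets $1 \otimes A$ nontrivially. Pick a nonzero homogeneous $x \in I_\gamma$ of minimal $T$-degree, written $x = \sum_{k} T^k \otimes a_k$ with $a_k \in A_\gamma$. By taking $\lambda$-products of $x$ with the identity element $1 \otimes E_N \in (\Cur A)_e$ (whose presence is established inside the proof of Theorem \ref{thm:FiniteIrreducible}) and comparing coefficients of $\lambda$, one manufactures elements of $I$ of strictly smaller $T$-degree unless $x \in 1 \otimes A$. Then the subspace $A' := \{a \in A \mid 1 \otimes a \in I\}$ is nonzero, graded, and closed under left and right multiplication by $A$ (because $(1 \otimes b)\oo{\lambda}(1 \otimes a)$ and $(1 \otimes a)\oo{\lambda}(1 \otimes b)$ lie in $I$ and, after extracting the appropriate $\lambda$-coefficient, yield $1 \otimes ba$ and $1 \otimes ab$ respectively). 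Simplicity of $A$ gives $A' = A$, so $1 \otimes A \subseteq I$; since $I$ is an $H^0$-submodule of $H^0 \otimes A$, it follows that $I = \Cur A$.

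The main obstacle is the extraction step: in the graded setting both the $H^0$-module structure and the operations $(\cdot\oo{\lambda}\cdot)$ on $\Cur A$ are twisted by $\sigma$ and $\varphi$, so the naive polynomial-degree-in-$T$ argument must be justified. However, because $\sigma(\alpha)\in\Bbbk^*$ and every value of $\varphi$ is a scalar in $\mathbb{A}^1$, all twists reduce to affine substitutions $T \mapsto \sigma(\alpha)T + \mathrm{const}$, which are polynomial automorphisms of $H^0 = \Bbbk[T]$ preserving degree. This legitimizes the minimal-degree selection and reduces the extraction to the familiar ungraded calculation.
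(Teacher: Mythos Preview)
Your argument is correct and follows the paper's route: Theorem~\ref{thm:FiniteIrreducible} gives $C\simeq\Cur A$ with $A$ graded irreducible in $\End V$, and Corollary~\ref{cor:Irred->Simple} gives the graded simplicity of~$A$; the paper treats the remaining implication ``$A$ simple $\Rightarrow$ $\Cur A$ simple'' as immediate, whereas you spell it out. Your reduction step is valid---in fact $x\oo{\lambda}(1\otimes E_N)$ already lands in $1\otimes A_\gamma$ (each $(T^k\otimes a_k)\oo{\lambda}(1\otimes E_N)$ is a scalar multiple of $1\otimes a_k$), so the minimal-$T$-degree framing is more than you need.
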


\begin{cor}\label{cor:SimpleGraded}
A finite simple $(\Gamma,\sigma,\varphi)$-graded
associative conformal algebra is isomorphic to
the current  graded conformal algebra $\Cur A$
over a simple finite-dimen\-si\-onal graded associative
algebra~$A$.
\end{cor}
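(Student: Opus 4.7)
The plan is to chain together the three preceding results of this section, which have been set up precisely so that this corollary becomes their direct consequence. Let $C$ be a finite simple $(\Gamma,\sigma,\varphi)$-graded associative conformal algebra over $G^0=\mathbb A^1$.

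First, I would invoke Proposition \ref{prop:CendEmbedding} to obtain a finite, faithful, irreducible graded conformal $C$-module $M$ which is torsion-free as an $H^0$-module. Since $H^0=\Bbbk[T]$ is a principal ideal domain and $M$ is finitely generated and torsion-free over $H^0$, the structure theorem for modules over a PID forces $M$ to be free. Choosing a homogeneous $H^0$-basis $e_1,\dots ,e_N$ of $M$ identifies $M$ with $H^0\otimes V$ for the finite-dimensional $\Gamma$-graded space $V=\bigoplus_\gamma V_\gamma$ spanned by the $e_i$.

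Next, faithfulness of $M$ means that the associated representation $\rho:C\to \Cend^\Gamma M$ is injective, so via $\rho$ we may regard $C$ as a finite graded conformal subalgebra of $\Cend^\Gamma_N$ in the matrix presentation introduced above; irreducibility of $M$ as a graded conformal $C$-module is exactly the condition in Definition \ref{defn:IrredSubalgebra} that $\rho(C)$ is a graded irreducible subalgebra of $\Cend^\Gamma_N$. Theorem \ref{thm:FiniteIrreducible} then applies and yields an isomorphism $C\simeq \Cur A$, where $A\subseteq \End V$ is a graded irreducible subalgebra.

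Finally, Corollary \ref{cor:Irred->Simple} asserts that every graded irreducible subalgebra of $\End V$ is simple, so $A$ is a simple finite-dimensional graded associative algebra over $\Bbbk$, which is exactly the required conclusion. There is no real obstacle here beyond verifying that the hypotheses of each quoted result are met; the only subtlety worth flagging is the freeness step, which relies on $G^0=\mathbb A^1$ so that $H^0=\Bbbk[T]$ is a PID --- this is where the structural assumption on $G^0$ enters, and it is essential for passing from the abstract embedding into $\Cend^\Gamma M$ to the concrete matrix algebra $\Cend^\Gamma_N$ on which Theorem \ref{thm:FiniteIrreducible} operates.
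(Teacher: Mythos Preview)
Your proposal is correct and follows essentially the same route as the paper: invoke Proposition~\ref{prop:CendEmbedding} to embed $C$ as a finite graded irreducible subalgebra of $\Cend^\Gamma_N$, then apply Theorem~\ref{thm:FiniteIrreducible} to obtain $C\simeq \Cur A$ with $A\subseteq \End V$ graded irreducible. The only minor difference is in the last step: you appeal to Corollary~\ref{cor:Irred->Simple} to deduce that $A$ is simple, whereas the paper argues directly that a nonzero proper graded ideal $I\triangleleft A$ would yield a nonzero proper graded ideal $\Cur I\triangleleft C$, contradicting the simplicity of~$C$; both arguments are valid and equally short.
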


\begin{proof}
By Proposition \ref{prop:CendEmbedding} and Theorem \ref{thm:FiniteIrreducible}
we have $C\simeq \Cur A$ for a finite-dimensional graded
associative algebra $A$. The graded algebra $A$ has to be simple
since a nonzero proper graded ideal $I\triangleleft A$
produces similar ideal $\Cur I$ in~$C$.
\end{proof}

Now, consider semisimple graded associative conformal algebras.
We need a statement that can be proved in general settings.

Let $G^0$ be a connected linear algebraic group, $\Gamma $ be a group,
and let $M$ be a finitely generated torsion-free
$\Gamma $-graded $H^0$-module.
As before, fix a homomorphism $\sigma : \Gamma \to \Aut G^0$
and a cocycle $\varphi\in Z^2(\Gamma, G^0, \sigma)$.

\begin{lem}\label{lem:MinimalFaithful}
Let $C$ be a semisimple $(\Gamma, \sigma, \varphi)$-graded
associative conformal algebra over $G^0$,
and let $M$ be a faithful graded conformal $C$-module
which is torsion-free and finitely
generated as an $H^0$-module.
Then the set of all nonzero faithful graded $C$-submodules of
$M$ contains a minimal element.
\end{lem}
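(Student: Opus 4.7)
The plan is to convert the descending-chain problem (which has no obvious DCC at the level of faithful submodules) into a rank inequality via saturation, then isolate a canonical ``derived'' submodule on which minimality becomes a direct consequence of Lemma \ref{lem:TorsionAnnihilates} and the semisimplicity of $C$. For any graded $H^0$-submodule $N \subseteq M$, set $N^{\mathrm{sat}} = \{m \in M : hm \in N \text{ for some } 0 \neq h \in H^0\}$. Axiom (C3) of Definition \ref{defn:GradedConformalG0} ensures $N^{\mathrm{sat}}$ remains graded and $C$-invariant; since $H^0$ has no zero divisors and $M$ is torsion-free, $M/N^{\mathrm{sat}}$ is torsion-free, and of course saturation preserves faithfulness.

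I first locate a minimal saturated faithful graded $C$-submodule of $M$. Among saturated graded submodules, any strict inclusion $N' \subsetneq N$ forces a strict drop in $H^0$-rank, because $N/N'$ is a nonzero submodule of the torsion-free quotient $M/N'$ and hence has positive rank. Since $\mathrm{rank}_{H^0}(M) < \infty$, descending chains of saturated faithful submodules have bounded length, so a minimal element $N$ exists.

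Next, set $N_{(1)} := \sum_{\lambda \in G^0} H^0 (C \oo{\lambda} N)$; by associativity \eqref{eq:Assoc} this is a graded $C$-submodule of $N$, and it is nonzero because $N$ is faithful. I claim $N_{(1)}$ is itself faithful. Indeed, if $c \oo{\lambda} N_{(1)} = 0$ for all $\lambda$, then associativity yields $(c \oo{\lambda'} b) \oo{\mu} n = 0$ for every $b \in C$, $n \in N$, $\lambda',\mu \in G^0$, forcing $c \oo{\lambda'} b \in \mathrm{Ann}_C(N) = 0$. Thus $c$ lies in $L := \{c' \in C : c' \oo{\lambda} C = 0 \text{ for all } \lambda\}$, which is a graded two-sided ideal (verified using associativity and axiom (C3)) satisfying $L \oo{\lambda} L \subseteq L \oo{\lambda} C = 0$; semisimplicity of $C$ forces $L = 0$, so $c = 0$.

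Finally I check $N_{(1)}$ is minimal faithful. If $N' \subsetneq N_{(1)}$ were faithful, then $(N')^{\mathrm{sat}} \subseteq N^{\mathrm{sat}} = N$ would be saturated faithful, hence equal to $N$ by the first step. Then $N/N'$ is a finitely generated $H^0$-torsion module, so $hN \subseteq N'$ for some $0 \neq h \in H^0$; Lemma \ref{lem:TorsionAnnihilates} (applied with $U = N$, $W = N'$) gives $C \oo{\lambda} N \subseteq N'$ for every $\lambda$, whence on taking $H^0$-spans $N_{(1)} \subseteq N'$, contradicting $N' \subsetneq N_{(1)}$. The main technical point is the faithfulness of $N_{(1)}$, where semisimplicity of $C$ enters through the auxiliary ideal $L$; the rest is routine once the saturation framework is in place.
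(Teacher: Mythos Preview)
Your proof is correct and follows essentially the same line as the paper's: both reduce the problem to a rank argument (you via saturation, the paper via $Q(H^0)$-spans of a descending chain), invoke Lemma~\ref{lem:TorsionAnnihilates} to show that the ``derived'' submodule $CN$ sits inside any faithful submodule with the same saturation, and rely on semisimplicity to guarantee that $CN$ is itself faithful. Your version is a direct, Zorn-free construction and makes explicit (through the abelian ideal $L$) the faithfulness of $N_{(1)}=CN$, a point the paper simply asserts.
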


\begin{proof}
Suppose
\[
M=M_0\supseteq M_1 \supseteq M_2 \supseteq \dots
\]
is a descending chain of faithful graded $C$-submodules.
Since $M$ is Noetherian, all $M_k$ are finitely generated over $H^0$.

Consider the field of fractions $Q(H^0)$ and the linear space
$Q(M)=Q(H^0)\otimes _H M$. Since $M$ is torsion-free, $M$ is a $\Bbbk $-subspace
of $Q(M)$ and all $M_k$ span finite-dimensional $Q(H^0)$-subspaces of $Q(M)$.
Therefore, there exists $N\ge 0$ such that for every $n>N$ there exists
$0\ne h\in H^0$ such that $hM_N \subseteq M_n$.

By Lemma \ref{lem:TorsionAnnihilates}, $(C\oo{\lambda } M_N)\subseteq M_n$
for all $\lambda \in G^0$. The $H^0$-submodule $CM_N=H^{0}\{(a\oo{\lambda } u)
\mid a\in C,\, u\in M_N,\, \lambda \in G^0 \}$
is a nonzero faithful graded conformal $C$-submodule of $M$
which is a lower bound of all $M_k$ in the initial chain.
The Zorn Lemma implies the claim.
\end{proof}

\begin{thm}
If every finite irreducible $(\Gamma, \sigma , \varphi)$-graded subalgebra
of $\Cend ^\Gamma M$ is simple then every finite
semisimple $(\Gamma, \sigma , \varphi)$-graded subalgebra
of $\Cend^\Gamma M$ is a direct sum of simple ones.
\end{thm}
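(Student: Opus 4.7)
The plan is to split one graded ideal off at a time from $C$ and induct on the $H^0$-rank. Since $C\subseteq \Cend^\Gamma M$, the module $M$ is automatically $C$-faithful; by Lemma \ref{lem:MinimalFaithful} I choose a minimal nonzero faithful graded $C$-submodule $M_0\subseteq M$. If $M_0$ happens to be an irreducible graded $C$-module, then $C$ sits inside $\Cend^\Gamma M_0$ as a finite graded irreducible subalgebra (and hence as one inside $\Cend^\Gamma M$), so the hypothesis forces $C$ itself to be simple: this is the base case of the induction.

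Otherwise $M_0$ has a proper nonzero graded $C$-submodule $N$, which by minimality of $M_0$ cannot be $C$-faithful, so $I:=\mathrm{Ann}_C(N)$ is a nonzero graded ideal. I then define $M':=H^0\cdot\{a\oo{\lambda}v : a\in I,\ v\in M_0,\ \lambda\in G^0\}$, a graded $C$-submodule of $M_0$, and the graded ideal $I':=\mathrm{Ann}_C(M')$. Two rounds of the associativity axiom \eqref{eq:Assoc}, combined with the $C$-faithfulness of $M_0$ and semisimplicity, will show that $I\cap I'$ is an abelian graded ideal of $C$ (hence zero) and, by a parallel argument, that $I\oo{\lambda}I'=I'\oo{\lambda}I=0$; so $I$ and $I'$ already fit together as an internal direct sum of graded ideals inside $C$.

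The heart of the argument is then proving the equality $C=I+I'$. My approach is to establish the splitting $M_0=M'\oplus N_I$ as graded $H^0$-modules, where $N_I:=\{v\in M_0 : I\oo{\lambda}v=0\ \forall\lambda\in G^0\}$ (a graded $C$-submodule which contains $N$): the sum $M'+N_I$ is a graded $C$-submodule of $M_0$ on which both $I$ and $I'$ act faithfully, and minimality of $M_0$ together with Lemma \ref{lem:TorsionAnnihilates} should force $M'+N_I=M_0$ and $M'\cap N_I=0$. Once this splitting is in hand, every element of $C$ is determined by its separate actions on $M'$ (modulo $I'$) and on $N_I$ (modulo $I$), giving $C=I\oplus I'$; induction on the rank of $C$ as an $H^0$-module then decomposes the two semisimple ideals $I$ and $I'$ into direct sums of simple graded ideals, completing the proof. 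The main obstacle will be the splitting $M_0=M'\oplus N_I$, since no general complete-reducibility theorem is available for graded conformal modules: one must carefully balance the minimality of $M_0$, the semisimplicity of $C$, and the torsion-control provided by Lemma \ref{lem:TorsionAnnihilates}.
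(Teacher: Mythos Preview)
Your overall shape---pass to a minimal faithful submodule $M_0$, peel off one ideal, and recurse---matches the paper. The paper, however, chooses the proper submodule to be a \emph{maximal} graded $C$-submodule $U\subsetneq M_0$, not an arbitrary one. This single change is what makes the argument close, and it is exactly where your proposal has a genuine gap.

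The problem is the step ``once $M_0=M'\oplus N_I$, every element of $C$ is determined by its separate actions on $M'$ and on $N_I$, giving $C=I\oplus I'$.'' Being \emph{determined} by the two restrictions only yields the injection
\[
C\;\hookrightarrow\; C/I'\times C/I,\qquad c\mapsto (c|_{M'},\,c|_{N_I}),
\]
whose kernel is $I\cap I'=0$. What you need is the surjection, i.e.\ that for every $c\in C$ there is some $a\in I$ with $a|_{M'}=c|_{M'}$; equivalently that the image of $I$ in $C/I'$ is all of $C/I'$. That is precisely the statement $I+I'=C$ you are trying to prove, so the reasoning is circular. Notice also that the theorem's hypothesis ``irreducible $\Rightarrow$ simple'' appears in your outline only in the base case; in your inductive step it is never invoked, yet this is the step where it is actually needed. (The unproved module splitting $M'\cap N_I=0$ is a second, independent obstacle, as you yourself flag.)

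The paper sidesteps both difficulties. With $U$ maximal, the quotient $M_0/U$ is an \emph{irreducible} graded $C$-module. Setting $I=\{a\in C: aU=0\}$ and $J=\{a\in C: Ia=0\}$ (which coincides with your $I'$, by the same annihilator juggling you use), one checks that $C/J$ acts faithfully and irreducibly on $M_0/U$. Now the hypothesis applies directly: $C/J$ is simple. Since $I\cap J=0$, the image of $I$ in $C/J$ is a nonzero ideal of a simple algebra, hence all of $C/J$; thus $I+J=C$ and $C=I\oplus J$. No module splitting of $M_0$ is ever required. If you want to salvage your approach, the minimal repair is to take $N$ maximal from the start and then argue with the irreducible quotient $M_0/N$ rather than attempting an internal direct-sum decomposition of~$M_0$.
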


\begin{proof}
Suppose $C\subseteq \Cend^\Gamma M$ is a semisimple $(\Gamma, \sigma , \varphi)$-graded subalgebra.
It follows from Lemma \ref{lem:MinimalFaithful} that
there exists a minimal faithful graded conformal $C$-submodule $M_0\subseteq M$,
so $C\subseteq \Cend^\Gamma M_0$.

Let $U$ be a maximal graded conformal $C$-submodule of $M_0$.
If $U=0$ then $M_0$ is irreducible, so $C$ is simple.
If $U\ne 0$ then $I=\{a\in C\mid aU=0\} $ is a nonzero graded ideal of~$C$.
Moreover, if
$U'$ stands for $\{v\in M_0\mid Iv =0\}\subset M_0$ then $U=U'$
from the maximality of~$U$.

Denote $J=\{a\in C\mid Ia=0 \}$. This is a graded ideal in $C$, and $I\cap J=0$.
Note that $JM\subseteq U' = U$. Conversely, if
$aM_0\subseteq U$ then $IaM_0\subseteq IU=0$, so $M/U$ is
a faithful graded conformal $C/J$-module. This module is irreducible
by the choice of~$U$.

Hence, by the assumption of the Theorem, $C/J$ is simple, so $I\simeq C/J$, and
$C=I\oplus J$. It is obvious that $J$ is semisimple, and we may apply the same
procedure to~$J$.
Noetherianity of $C$ implies the process to stop on a finite number of steps,
so $C = I_1\oplus \dots \oplus I_p$, where $I_k$ are simple finite
graded conformal algebras described by Corollary~\ref{cor:SimpleGraded}.
\end{proof}

\begin{cor}
A finite semisimple $(\Gamma, \sigma, \varphi)$-graded associative
conformal algebra over the affine line is a direct sum of simple ones.
\end{cor}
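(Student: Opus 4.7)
The plan is to realize the abstract semisimple conformal algebra $C$ as a graded conformal subalgebra of $\Cend^\Gamma M$ for a free $H^0$-module $M$, and then invoke the preceding Theorem together with Corollary~\ref{cor:IrredSimple}.

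First, I intend to show that $C$ is torsion-free over $H^0 = \Bbbk[T]$. Let $T(C)$ be the $H^0$-torsion submodule; since the grading of $C$ is a decomposition of $H^0$-modules, $T(C)$ is automatically graded. For a homogeneous $a \in T(C) \cap C_\alpha$ with $h a = 0$ for some nonzero $h \in H^0$, axiom (C3) gives
\[
0 = (ha \oo{\lambda} b) = h\bigl((\lambda^{-1}\varphi(\alpha,\alpha^{-1})^{-1})^{\sigma(\alpha)}\bigr)(a \oo{\lambda} b).
\]
Since $\sigma(\alpha) \in \Bbbk^*$, the scalar coefficient is a nonzero polynomial in $\lambda$, and regularity (C1) forces $(a \oo{\lambda} b) = 0$ for all $b \in C$ and $\lambda \in G^0$. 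The dual computation using (C3) for the right slot shows $(b \oo{\lambda} a) \in T(C)$. Hence $T(C)$ is a graded ideal with $T(C) \oo{\lambda} C = 0$, in particular an abelian ideal; semisimplicity forces $T(C) = 0$. Over the PID $\Bbbk[T]$, a finitely generated torsion-free module is free, so $C$ is free of some finite rank $N$ as $H^0$-module.

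Next, I consider the regular representation $\ell: C \to \Cend^\Gamma C$, $\ell(a)(\lambda) = (a \oo{\lambda}\cdot)$. The conditions (C2)--(C3) ensure $\ell$ is a well-defined homogeneous $H^0$-linear map into $\Cend^\Gamma C$, and associativity \eqref{eq:Assoc} ensures $\ell$ preserves all the $\lambda$-products. The kernel $\Ker \ell$ is the set of $a \in C$ with $(a \oo{\lambda} b) = 0$ for every $b$ and $\lambda$; it is a graded left ideal of $C$, and its closure under right multiplication follows directly from \eqref{eq:Assoc}. Since $(\Ker \ell) \oo{\lambda} (\Ker \ell) = 0$, semisimplicity yields $\Ker \ell = 0$, exhibiting $C$ as a graded conformal subalgebra of $\Cend^\Gamma C$.

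Finally, because $C$ is free of rank $N$ over $H^0$, the algebra $\Cend^\Gamma C$ is isomorphic to the matrix realization $\Cend^\Gamma_N$ appearing in Corollary~\ref{cor:IrredSimple}. That corollary states that every finite graded irreducible conformal subalgebra of $\Cend^\Gamma_N$ is simple, which is precisely the hypothesis of the Theorem preceding this corollary. Invoking that Theorem delivers the required decomposition of $C$ as a direct sum of simple graded conformal algebras. The main obstacle is the torsion-freeness argument in the first paragraph: once $C$ is known to be free over $\Bbbk[T]$, the reduction to Corollary~\ref{cor:IrredSimple} is routine.
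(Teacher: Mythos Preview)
Your proposal is correct and follows essentially the same route as the paper: embed $C$ into $\Cend^\Gamma C$ via the left regular representation, then invoke Corollary~\ref{cor:IrredSimple} to verify the hypothesis of the preceding Theorem. The paper's proof is a single sentence and leaves implicit exactly the two points you work out---that $C$ is torsion-free over $\Bbbk[T]$ (hence free, so $\Cend^\Gamma C \simeq \Cend^\Gamma_N$) and that $\ell$ is injective because $\Ker\ell$ is an abelian graded ideal---so your write-up is a faithful expansion rather than a different argument.
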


\begin{proof}
It is enough to note that a finite semisimple graded conformal algebra $C$
embeds into $\Cend^\Gamma C$, and then apply Corollary \ref{cor:IrredSimple}.
\end{proof}

\section*{Acknowledgements}
The work is supported by the Russian Foundation for Basic Research (project 09--01--00157) 
and the Federal Target Grant ``Scientific and
educational staff of innovation Russia'' for 2009--2013
(contracts 02.740.11.5191, 02.740.11.0429, 14.740.11.0346).
The author is grateful to Valeri\u i Churkin for helpful
discussions.

\end{document}